\newtheorem{thm}{Theorem}
\newtheorem{cor}[thm]{Corollary}
\newtheorem{Def}{Definition}[section]
\numberwithin{equation}{section}
\journal{Journal of \LaTeX\ Templates}
\begin{document}

\begin{frontmatter}

\title{ Existence and Uniqueness Theorems for Differential Equations with Proportional Delay}


\author{Prajakta Rajmane$^{a*}$ }
\ead{prajaktarajmane@gmail.com}
\author{Jayvant Patade$^{b}$ }
\ead{dr.jayvantpatade@gmail.com}
\author{M. T. Gophane$^{a}$ }
\ead{mtg\_math@unishivaji.ac.in}
\address{$^{a}$Department of Mathematics, Shivaji University, Kolhapur - 416004, India.\\ $^{b}$Department of Mathematics, Jaysingpur College, Jaysingpur - 416101, India.}


\cortext[mycorrespondingauthor]{Corresponding author}
\begin{abstract}
The differential equation (DE) with proportional delay is a particular case of the time-dependent delay differential equation (DDE). In this paper, we solve non-linear  DEs with proportional delay using the successive approximation method (SAM). We prove the existence, uniqueness of theorems, and stability for DEs with proportional delay using SAM. We derive convergence results for these equations by using the Lipschitz condition. We generalize these results to the fractional differential equations (FDEs) and system of FDEs containing Caputo fractional derivative. Further, we obtain the series solution of the pantograph equation and Ambartsumian equation in the form of a power series which are convergent for all reals. Finally, we illustrate the efficacy of the SAM  by example. The results obtained by SAM are compared with exact solutions and other iterative methods. It is observed that SAM is simpler compared to other methods and the solutions obtained using SAM are consistent with the exact solution.

\end{abstract}
\begin{keyword}
Successive approximation method; Lipschitz condition; Caputo derivative; Existence-uniqueness; proportional delay; pantograph equation; Ambartsumian equation.
\MSC[2020] 26A33; 34A08; 34K06; 34K20. 
\end{keyword}
\end{frontmatter}

\section{Introduction}
The delay differential equations (DDE) contain the state variable term at a past time $t-\tau$. The inclusion of the delay $\tau$ makes the DDE an infinite dimensional dynamical system. Even if it is very difficult to analyze and solve such equations, this branch is popular among the applied scientists due to the applications in various fields. 
\par On the other hand, if the order of the derivative in a differential equation is any arbitrary number (instead of a positive integer) then the equation is called as the fractional differential equation (FDE).  Even though there are several inequivalent definitions of fractional derivative operator, one can select the derivative which is appropriate for the model under consideration. This flexibility is a key feature behind the popularity of fractional calculus.
\par Daftardar-Gejji and coworkers proposed numerical schemes \cite{bhalekar2011predictor, daftardar2015solving} for solving fractional order delay differential equations (FDDE).  Modified Laguerre wavelets method \cite{iqbal2015modified}, spectral collocation method \cite{ali2019chebyshev},  fractional-order fibonacci-hybrid functions \cite{sabermahani2020fractional} are few other methods for solving FDDEs.  Stability analysis of FDDEs is proposed in \cite{bhalekar2013stability,bhalekar2016stability,kaslik2012analytical,bhalekar2019analysis}. Applications of FDDE are presented in \cite{bhalekar2011fractional,rihan2020dynamics,latha2017fractional,rihan2021fractional}.

\par In general, the delay $\tau$ in the DDE ${x'(t)}=f(t,x(t),x(t-\tau))$ is not constant. The analysis becomes more difficult when $\tau$ depends on time or state. The proportional delay differential equation ${x'(t)}=f(t,x(t),x(qt))$ or a pantograph equation is a particular case of time-dependent DDE with $\tau(t)=(1-q)t$. 
These equations are proposed by Ockendon and Tayler  in the seminal work \cite{ockendon1971dynamics} to model the motion of an overhead trolley wire. Few other applications of these equations are discussed in \cite{buhmann1993stability,bhalekar2017series}. The Daftardar-Gejji and Jafari method (DJM) is applied in \cite{bhalekar2016analytical} to find analytical solutions of pantograph equation. Further, the authors presented the various relations of the solution series with the existing special functions.  Patade and Bhalekar  proposed the power series solution Ambartsumian equation \cite{patade2017analytical} by using DJM. The analytical solution of pantograph equation are discussed in \cite{patade2017analytical1}. 
 In this paper, we generalize the results in \cite{coddington2012introduction} on the existence-uniqueness of ordinary differential equations (ODE) to differential equations with proportional delay,  FDEs proportional delay and system of FDEs with proportional delay. We use the successive approximation method (SAM) to prove our results.
 
  The paper is organized as follows. In Section 2, we
   give definitions and notations of fractional derivatives and integrals. The differential equations with proportional delay are described in Section 3. 
   Successive approximate solutions and existence theorem are discussed in Section 4. The stability analysis is presented in Section 5. The series solution of the pantograph equation and Ambartsumian equation are described in Section 6.   
     The generalization of these results to  FDEs and the system of FDEs are derived in Section 7 and Section 8. Section 9 deals with illustrative
     example and the conclusions are summarized in Section 10. 
\section{Preliminaries and Notations} \label{Preliminaries}
\begin{Def}\cite{kilbas2006theory}
	The  Riemann-Liouville fractional integral of order $\alpha> 0 $ of $f\in C[0, \infty) $ is defined as
	\begin{equation}
	I^\alpha f(t)=\frac{1}{\Gamma(\alpha)} \int_{0}^t (t-\zeta)^{\alpha-1}f(\zeta)d\zeta,\quad t>0.
	\end{equation}
\end{Def}
\begin{Def}\cite{kilbas2006theory}
	The (left sided) Caputo fractional derivative of $f, f \in C_{-1}^m, m\in\mathbb{N}\cup\{0\}$, is defined as:
	\begin{eqnarray}
	D^\alpha f(t)&=&\frac{d^m}{ dt^m} f(t),\quad \alpha = m \nonumber\\
	&=& I^{m-\alpha}\frac{d^m}{ dt^m} f(t),\quad {m-1} <\alpha <m,\quad m\in \mathbb{N}.
	\end{eqnarray}
\end{Def}
Note that for $0\le m-1 < \alpha \le m$ and $\beta>-1$
\begin{eqnarray}
I^\alpha x^\beta &=&\frac{\Gamma{(\beta+1)}}{ \Gamma{(\beta+\alpha+1)}} x^{\beta+\alpha},\nonumber\\
\left(I^\alpha D^\alpha f\right)(t)&=& f(t)-\sum_{k=0}^{m-1} f^{(k)}(0)\frac{t^k}{k!}.
\end{eqnarray}
\begin{Def}\cite{kilbas2006theory}
	The  Mittag-Leffler function is defined as  
	\begin{equation}
	E_\alpha (t)=\sum_{n=0}^\infty\frac{t^n}{\Gamma{(\alpha n+1)}}, \quad \alpha>0.
	\end{equation}
\end{Def}

\begin{Def}\cite{kilbas2006theory}
 The multi-parameter Mittag-Leffler function is defined as:
	\begin{equation*}
	E_{(\alpha_{1},\cdots,\alpha_{n}), \beta}(z_1, z_2,\cdots,z_{n})=\sum^\infty_{k=0}\mathop{\sum_{l_1+\cdots+l_{n } =k}}_{l_j\geq 0}(k;l_1,\cdots,l_{n})\left[\frac{\displaystyle\prod_{j=1}^{n }z^{l_j}_{j}}{\Gamma(\beta +\displaystyle\sum^{n}_{j=1}\alpha_{j}l_j)}\right].
	\end{equation*}
	where, $\left(k;l_1,l_2,\cdots,l_n\right)$ is the multinomial coefficient defined as
	\begin{equation}
	\left(k;l_1,l_2,\cdots,l_n\right)=\frac{k!}{l_1!l_2!\cdots l_n!}.
	\end{equation}
\end{Def}

\section{Differential Equations with Proportional delay}
Consider the differential equations with proportional delay
\begin{equation}
y'(t)=f(t, {y}(t), {y}(qt)),y(0)=y_0, 0<q<1, \label{1}
\end{equation}
where  $f$ is a continuous function defined on some rectangle
\begin{equation}
R=\{|t|\leq a,|y(t)-y_0|\leq b,|y(qt)-y_0|\leq b, a>0,\, b>0\}\nonumber.
\end{equation}

\begin{thm}
A function $\phi$ is a solution of the IVP (\ref{1}) on an interval $I$ if and only if it is a solution of the integral equation 
\begin{equation}
y(t)=y_0+\int_{0}^{t} f(x, {y}(x), {y}(qx))dx \quad \text{on}\quad I \label{2.1}
\end{equation}
\end{thm}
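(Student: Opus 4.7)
The plan is to prove the two directions of the equivalence using the fundamental theorem of calculus, with the continuity of $f$ on $R$ playing the role it usually does in the classical ODE version. The argument is essentially the one in Coddington and Levinson, but with the added observation that because $0<q<1$, whenever $t$ lies in an interval $I\subseteq [-a,a]$ containing $0$, the point $qt$ also lies in $I$, so the composed map $x\mapsto \phi(qx)$ makes sense wherever $\phi$ is defined and stays inside $R$.

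First I would prove the forward direction. Assume $\phi$ is a solution of the IVP on $I$. Then $\phi$ is differentiable, hence continuous, on $I$, and $(x,\phi(x),\phi(qx))\in R$ for $x\in I$. Since $f$ is continuous on $R$ and $x\mapsto (x,\phi(x),\phi(qx))$ is continuous on $I$, the composition $x\mapsto f(x,\phi(x),\phi(qx))=\phi'(x)$ is continuous on $I$. Integrating the identity $\phi'(x)=f(x,\phi(x),\phi(qx))$ from $0$ to $t$ and using $\phi(0)=y_0$ yields
\begin{equation*}
\phi(t)=y_0+\int_0^t f(x,\phi(x),\phi(qx))\,dx,
\end{equation*}
which is (\ref{2.1}).

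For the converse, assume $\phi$ is continuous on $I$ and satisfies (\ref{2.1}) with $(x,\phi(x),\phi(qx))\in R$ for $x\in I$. Setting $t=0$ in (\ref{2.1}) gives $\phi(0)=y_0$. Continuity of $\phi$ together with $q\in(0,1)$ makes $x\mapsto f(x,\phi(x),\phi(qx))$ continuous on $I$, so the fundamental theorem of calculus lets me differentiate the right-hand side of (\ref{2.1}) and conclude $\phi'(t)=f(t,\phi(t),\phi(qt))$ for $t\in I$. Hence $\phi$ solves the IVP.

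I do not foresee a genuine obstacle; this is a standard bookkeeping exercise. The only delicate point worth stating explicitly is the closure property $qI\subseteq I$ when $0\in I$ and $0<q<1$, which guarantees that the integrand in (\ref{2.1}) is well defined and that the rectangle condition $(x,\phi(x),\phi(qx))\in R$ can actually be checked from $\phi$ alone rather than from a separate extension. Everything else reduces to the continuity of $f$ and the fundamental theorem of calculus.
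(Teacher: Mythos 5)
Your proof is correct and takes essentially the same route as the paper's: integrate the identity $\phi'(x)=f(x,\phi(x),\phi(qx))$ for the forward direction, and differentiate the integral equation via the fundamental theorem of calculus for the converse. You are in fact more careful than the paper, which states the equivalence in two lines without the continuity justifications or the observation that $qI\subseteq I$ when $0\in I$ and $0<q<1$.
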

\begin{proof}
Let $\phi$ is a solution of the IVP (\ref{1}) on an interval $I$. Then\\
\begin{equation}
\phi'(t)=f(t, {\phi}(t), {\phi}(qt)),\phi(0)=y_0, 0<q<1 \label{3.1}
\end{equation}
The equivalent integral equation (\ref{3.1}) is
\begin{equation}
\phi(t)=\phi(0)+\int_{0}^{t} f(x, {\phi}(x), {\phi}(qx))dx.\label{4.1}
\end{equation}
and $ \phi(0)=y_0 $. Thus $\phi$ is a solution of the IVP (\ref{2.1}).\\
Conversely, suppose equation (\ref{4.1}) hold. Differentiate equation (\ref{4.1}) w.r.t. $t$, we get
\begin{equation}
\phi'(t)=f(t, {\phi}(t), {\phi}(qt)), 0<q<1 \quad \forall t\in I\nonumber
\end{equation}
From equation (\ref{2.1}) $\phi(0)=y_0$.\\
Hence $\phi$ is a solution of the IVP (\ref{1}).
\end{proof}

\section{Successive Approximate Solution for Differential Equations with Proportional Delay}

Let $\phi_0(t)=y_0$ be the first approximate solution of the IVP (\ref{1}). Then
\begin{eqnarray}
\phi_1(t)&=&y_0+\int_{0}^{t} f(x, {\phi_0}(x), {\phi_0}(qx))dx.\nonumber\\
\phi_2(t)&=&y_0+\int_{0}^{t} f(x, {\phi_1}(x), {\phi_1}(qx))dx.\nonumber
\end{eqnarray}
Continuing in this way, we obtain
\begin{equation}
\phi_{k+1}(t)=y_0+\int_{0}^{t} f(x, {\phi_k}(x), {\phi_k}(qx))dx.\quad k=0,1,2,\cdots.\label{22}
\end{equation}

\begin{thm}\label{thm2}
Let f is continuous and $|f|\leq M$ on $R$. The successive approximation (\ref{22}) exist and continuous on the interval $I=[-\zeta,\zeta]$, where $\zeta=\text{min}\left\{a,\frac{b}{M}\right\}$.  
	If $t\in I$  then $\left(t,y(t),y(qt)\right)\in R$ and $|\phi_{k}(t)-y_0|\leq M|t|$, $|\phi_{k}(qt)-y_0|\leq M|t|$.
\end{thm}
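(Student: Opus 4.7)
The plan is a straightforward induction on $k$. For the base case $k=0$, the constant function $\phi_0 \equiv y_0$ is continuous on $I$, both bounds $|\phi_0(t)-y_0| = 0 \leq M|t|$ and $|\phi_0(qt)-y_0| = 0 \leq M|t|$ hold trivially, and $(t,y_0,y_0) \in R$ for every $|t| \leq \zeta \leq a$.

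For the inductive step I assume $\phi_k$ is continuous on $I$, that it satisfies both inequalities, and that $(x,\phi_k(x),\phi_k(qx)) \in R$ for every $x \in I$. The first item to verify is that the integrand in the recursion (\ref{22}) is well-defined and continuous on $I$. This rests on the closure observation that $x \in I$ implies $qx \in I$ (since $0<q<1$ gives $|qx| \leq |x| \leq \zeta$), so $\phi_k(qx)$ is meaningful. Combining continuity of $f$, of $\phi_k$, and of the map $x \mapsto qx$, the composite $x \mapsto f(x,\phi_k(x),\phi_k(qx))$ is continuous on $I$, and hence $\phi_{k+1}$ defined by (\ref{22}) is continuous on $I$ by the fundamental theorem of calculus.

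The a priori bounds then follow immediately from $|f| \leq M$ on $R$: for $t \in I$,
\[
|\phi_{k+1}(t)-y_0| \;\leq\; \left|\int_0^t \bigl|f(x,\phi_k(x),\phi_k(qx))\bigr|\, dx\right| \;\leq\; M|t|,
\]
and replacing $t$ by $qt$ (which lies in $I$ by the closure observation) gives $|\phi_{k+1}(qt)-y_0| \leq M|qt| \leq M|t|$. Since $M|t| \leq M\zeta \leq b$, together with $|t| \leq a$, the point $(t,\phi_{k+1}(t),\phi_{k+1}(qt))$ belongs to $R$, which propagates the induction hypothesis to stage $k+1$ and closes the argument.

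The only delicate point worth flagging is the closure property $x \in I \Rightarrow qx \in I$, which is what legitimizes evaluating $\phi_k$ at the delayed argument inside the integral and what forces the bound on $\phi_{k+1}(qt)$ to remain within $R$; beyond that, the proof is essentially the classical Picard iteration argument of \cite{coddington2012introduction}, since the proportional delay preserves both the uniform bound $M$ and the continuity of the integrand.
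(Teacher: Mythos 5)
Your proof is correct and follows essentially the same inductive argument as the paper: base case $\phi_0\equiv y_0$, then continuity of the integrand plus the bound $|f|\leq M$ to get $|\phi_{k+1}(t)-y_0|\leq M|t|\leq b$ and the analogous bound at $qt$. The one point you make explicit that the paper leaves implicit is the closure property $x\in I\Rightarrow qx\in I$, which is a worthwhile clarification but not a different approach.
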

\begin{proof}
We prove the result by mathematical induction.\\
(i) Clearly  $\phi(0)=y_0$ is continuous on $I$. Thus, theorem is true for $k=0$.\\
(ii) For $k=1$, we have
\begin{eqnarray}
\phi_1(t)&=& y_0+\int_{0}^{t} f(x, {\phi_0}(x), {\phi_0}(qx))dx.\nonumber\\
\phi_1(t)&=& y_0+\int_{0}^{t} f(x, y_0, y_0)dx.\nonumber
\end{eqnarray}
Since $f$ is continuous and hence,   $\phi_1(t)$ exist.
 \begin{eqnarray}
|\phi_1(t)-y_0| &=&|\int_{0}^{t} f(x, {\phi_0}(x), {\phi_0}(qx))dx|.\nonumber\\
 &\leq&\int_{0}^{t}| f(x, {\phi_0}(x), {\phi_0}(qx))|dx.\nonumber\\
 &\leq& M|t| \nonumber\\
 &\leq& b,  \quad t\in I\nonumber\\
\text{and}\quad |\phi_1(qt)-y_0| &\leq& M|qt| \nonumber\\
&\leq& M|t|, \quad 0<q<1 \nonumber\\
 &\leq& b,  \quad t\in I\nonumber
\end{eqnarray}
Thus, for $t\in I$, $\left(t,y(t),y(qt)\right)\in R$ and $|\phi_{1}(t)-y_0|\leq M|t|$, $|\phi_{1}(qt)-y_0|\leq M|t|$.\\
The theorem is true for $k=1$\\
(iii)  Assume that theorem is true for $k=n$.\\
i.e. For $t\in I$, $\left(t,y(t), y(qt)\right)\in R$ and $|\phi_{n}(t)-y_0|\leq M|t|$, $|\phi_{n}(qt)-y_0|\leq M|t|$.\\
(iv) To prove the theorem for $k=n+1$.\\
If $t\in I$, then 
 \begin{eqnarray}
\phi_{n+1}(t)&=&y_0+\int_{0}^{t} f(x, {\phi_n}(x), {\phi_n}(qx))dx.\nonumber
\end{eqnarray}
Since $f$ is continuous and hence,   $\phi_{n+1}(t)$ exist on $I$.
\begin{eqnarray}
|\phi_{n+1}(t)-y_0|  &\leq& M|t| \nonumber\\
 &\leq& b,  \quad t\in I\nonumber\\
\text{and}\quad |\phi_{n+1}(qt)-y_0| &\leq& M|qt| \nonumber\\
&\leq& M|t|, \quad 0<q<1 \nonumber\\
 &\leq& b,  \quad t\in I\nonumber
\end{eqnarray}
Thus, if  $t\in I$, $\left(t,y(t),y(qt)\right)\in R$ and $|\phi_{n+1}(t)-y_0|\leq M|t|$, $|\phi_{n+1}(qt)-y_0|\leq M|t|$.\\
Hence by mathematical induction, the result is true for all positive integer $n$.
\end{proof}

\begin{thm}(Existence Theorem)
Let f is continuous and $|f|\leq M$ on the rectangle
\begin{equation}
R=\{|t|\leq a,|y(t)-y_0|\leq b,|y(qt)-y_0|\leq b, a>0,\, b>0\}\nonumber.
\end{equation}
Suppose $f$ satisfies Lipschitz condition in second and third variable with  Lipschitz constants $L_1$ and $L_2$ such that 
\begin{equation}
|f(t, {y_1}(t), {y_1}(qt))-f(t, {y_2}(t), {y_2}(qt))|\leq L_1|y_1(t)-y_2(t)|+L_1|y_1(qt)-y_2(qt)|\nonumber.
\end{equation}
Then the successive approximations (\ref{22}) converges on the interval  $I=[-\zeta,\zeta]$, where $\zeta=\text{min}\left\{a,\frac{b}{M}\right\}$ to a solution  $\phi$ of the IVP  (\ref{1}) on $I$.
\end{thm}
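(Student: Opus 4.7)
The plan is to mimic the classical Picard argument, taking advantage of the extra factor $q<1$ supplied by the proportional delay. Writing
\[
\phi_k(t) = \phi_0(t) + \sum_{j=0}^{k-1}\bigl(\phi_{j+1}(t) - \phi_j(t)\bigr),
\]
uniform convergence of $\{\phi_k\}$ on $I$ reduces to absolute, uniform convergence of this telescoping series; so the real task is to get a summable pointwise bound on $|\phi_{j+1}(t) - \phi_j(t)|$. Once that is in hand, each $\phi_k$ is continuous (by Theorem~\ref{thm2}), and the uniform limit $\phi$ will therefore be continuous on $I$.

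First I would set $L = L_1 + L_2$ (reading the second $L_1$ in the Lipschitz hypothesis as $L_2$) and prove by induction on $k\ge 0$ the estimate
\[
|\phi_{k+1}(t) - \phi_k(t)| \;\le\; \frac{M\, L^{k}\, |t|^{k+1}}{(k+1)!}, \qquad t\in I.
\]
The base case $k=0$ is exactly the bound $|\phi_1(t) - y_0| \le M|t|$ from Theorem~\ref{thm2}. For the inductive step, subtract the integral representations of $\phi_{k+1}$ and $\phi_k$ and apply the Lipschitz hypothesis:
\[
|\phi_{k+1}(t) - \phi_k(t)| \le \int_0^{|t|}\!\!\Bigl(L_1|\phi_k(x)-\phi_{k-1}(x)| + L_2|\phi_k(qx)-\phi_{k-1}(qx)|\Bigr)\,dx.
\]
The inductive hypothesis gives $|\phi_k(qx) - \phi_{k-1}(qx)| \le M L^{k-1}(qx)^{k}/k! \le M L^{k-1} x^{k}/k!$, so the $q$ is harmless (even helpful) and integrating $x^{k}$ produces the desired $|t|^{k+1}/(k+1)!$. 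Hypothesis $|\phi_k(t) - y_0|\le M|t|\le b$ from Theorem~\ref{thm2} ensures we are evaluating $f$ inside $R$ throughout, so the Lipschitz bound applies.

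With this bound, the majorizing series $\sum_k M L^k \zeta^{k+1}/(k+1)!$ converges (it equals $(e^{L\zeta}-1)M/L$), so by the Weierstrass $M$-test the sequence $\phi_k$ converges uniformly on $I$ to a continuous function $\phi$. Moreover, uniform convergence of $\phi_k(x)$ on $I$ forces uniform convergence of $\phi_k(qx)$ on $I$, and continuity of $f$ on the compact set $R$ lets us pass the limit under the integral sign in (\ref{22}) to obtain
\[
\phi(t) = y_0 + \int_0^{t} f\bigl(x,\phi(x),\phi(qx)\bigr)\,dx \qquad (t\in I).
\]
By Theorem~1, $\phi$ is a solution of the IVP (\ref{1}) on $I$, and since $|\phi(t) - y_0| \le M|t| \le b$ by passage to the limit in the bounds of Theorem~\ref{thm2}, it takes values in $R$ as required. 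The main obstacle is bookkeeping in the inductive estimate — matching the two Lipschitz constants against the mixed $x$- and $qx$-arguments — but the proportional delay actually \emph{simplifies} rather than complicates the classical Picard argument, because the factor $q^k$ is bounded by $1$ and never needs to be tracked sharply.
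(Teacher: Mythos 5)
Your proposal follows essentially the same route as the paper's proof: the telescoping decomposition $\phi_k=\phi_0+\sum(\phi_n-\phi_{n-1})$, the inductive estimate $|\phi_n(t)-\phi_{n-1}(t)|\le M(L_1+L_2)^{n-1}\frac{|t|^n}{n!}$ (where the factor $q<1$ only helps, exactly as you observe), and comparison with the exponential series to get uniform convergence. You actually go one step further than the paper, which stops once the majorizing series converges and never passes the limit under the integral sign to verify that the limit function satisfies the integral equation and hence (via Theorem~1) solves the IVP; your final paragraph supplies precisely that step, which is needed for the theorem as stated.
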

\begin{proof}
We have 
\begin{equation}
\phi_k(t)=\phi_0(t)+\sum_{n=1}^{k}[\phi_n(t)-\phi_{n-1}(t)]\nonumber.
\end{equation}
To prove the sequence $\{\phi_k\}$ converges, it is enough to prove the series 
\begin{equation}
\phi_0(t)+\sum_{n=1}^{\infty}[\phi_n(t)-\phi_{n-1}(t)]\label{6.1}
\end{equation}
is convergent.\\
By theorem (\ref{thm2}) the function $\phi_k$ all exist and continuous on $I$.\\
Also, $|\phi_{1}(t)-\phi_{0}(t)|\leq M|t|$ and $|\phi_{1}(qt)-\phi_{0}(qt)|\leq M|t|$ for $t\in I$.\\
Now,
\begin{eqnarray}
\phi_{2}(t)-\phi_{1} (t)&=&\int_{0}^{t}[ f(x, {\phi_1}(x), {\phi_1}(qx))- f(x, {\phi_0}(x), {\phi_0}(qx))]dx\nonumber\\
\therefore |\phi_{2}(t)-\phi_{1} (t)|&\leq& \int_{0}^{t}| f(x, {\phi_1}(x), {\phi_1}(qx))- f(x, {\phi_0}(x), {\phi_0}(qx))|dx\nonumber\\
&\leq& \int_{0}^{t}[ L_1 |{\phi_1}(x)-{\phi_0}(x)|+L_2 |{\phi_1}(qx)-  {\phi_0}(qx)|]dx\nonumber\\
&\leq& M(L_1+L_2)\frac{|t|^2}{2}\nonumber.
\end{eqnarray}
We shall prove by mathematical induction
\begin{equation}
|\phi_{n}(t)-\phi_{n-1} (t)|\leq M(L_1+L_2)^{n-1}\frac{|t|^n}{n!}\label{7}
\end{equation}
We have prove that equation (\ref{7}) true for $n=1,2.$\\
Assume that (\ref{7}) true for $n=m.$\\
We have 
\begin{eqnarray}
\phi_{m+1}(t)-\phi_{m} (t)&=&\int_{0}^{t}[ f(x, {\phi_m}(x), {\phi_m}(qx))- f(x, {\phi_{m-1}}(x), {\phi_{m-1}}(qx))]dx\nonumber\\
\therefore |\phi_{m+1}(t)-\phi_{m} (t)|&\leq& \int_{0}^{t}| f(x, {\phi_m}(x), {\phi_m}(qx))- f(x, {\phi_{m-1}}(x), {\phi_{m-1}}(qx)|dx\nonumber\\
&\leq& \int_{0}^{t}[ L_1 | {\phi_m}(x)-\phi_{m-1}(x)|+L_2 |{\phi_m}(qx)-  {\phi_{m-1}}(qx)|]dx\nonumber\\
&\leq& M(L_1+L_2)^{m}\frac{|t|^{m+1}}{(m+1)!}\label{6}.\nonumber
\end{eqnarray}
Thus, the result is true for $ n=m+1 $.\\
Hence, by the mathematical induction result is true for all $ n=1,2,\cdots.$\\
Therefore, the infinite series  (\ref{7}) is absolutely convergent on $I$.
This shows that the $n^{th}$ term of the series $|\phi_0(t)|+\sum_{n=1}^{\infty}|\phi_n(t)-\phi_{n-1}(t)|$ is less than $\frac{M}{(L_1+L_2)}$ times the $n^{th}$ term of the power series $e^{(L_1+L_2)|t|}$.
Hence The series (\ref{7}) is convergent.
\end{proof}

\section{Stability Analysis}
 The differential equations with proportional delay
\begin{equation}
y'(t) = f\left(t, y(t), y(qt)\right),
\end{equation}
is a special case of the time-dependent delay differential equation (DDE)
\begin{equation}
y'(t) = f\left(t, y(t), y\left(t-\tau(t)\right)\right)\quad \textrm{with}\quad\tau(t)=(1-q)t,  \nonumber
\end{equation}

\begin{Def}\cite{deng2006stability}
Consider the  DDE,
\begin{equation}
y'(t) = f(y(t), y(t-\tau(t))),\label{13}
\end{equation}
where $f:R\times R\rightarrow R$.
The flow $\phi_t(t_0)$ is a solution $y(t)$ of ({\ref{13}}) with initial condition $y(t) = t_0,\, t\leq 0$. The point $y^*$ is called equilibrium solution of  ({\ref{13}}) if $f(y^*,y^*)=0$.\\
\textbf{(a)} If, for any  $\epsilon > 0$, there exist  $\delta > 0$ such that $ |t_0-y^*|< \delta \Rightarrow |\phi_t(t_0)-y^*| < \epsilon,$ then the system ({\ref{13}}) is stable (in the Lyapunov sense) at the equilibrium  $y^*$.\\
\textbf{(b)} If the system ({\ref{13}}) is stable at   $y^*$ and moreover,$\lim\limits_{t\rightarrow\infty}|\phi_t(t_0)-y^*|=0$ then the system  ({\ref{13}}) is said to be asymptotically stable at $y^*$. 
\end{Def}
The following results are similar to those in \cite{deng2006stability}

\begin{thm}
Suppose that the  equilibrium solution $y^*$ of the equation
\begin{equation}
y' = f(y(t),y(t-\tau^*)),\quad \tau^*= \tau(t_0)
\end{equation}
is stable and 
$\|f(y(t),y(t-\tau(t))) - f(y(t),y(t-\tau(t_1)))\|<\epsilon_1 |t-t_1|$,
for some $\epsilon_1>0$ and $t,t_1\in [t_0, t_0+c),$
c is a positive constant, then there exists $\bar{t}> 0$ such that
the equilibrium solution $y^*$ of Eq. ({\ref{13}}) is stable on finite time interval $[t_0,\bar{t})$.
\end{thm}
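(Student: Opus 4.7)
The plan is to compare the solution $y(t)$ of the time-dependent DDE (\ref{13}) with the solution $\bar y(t)$ of the \emph{frozen-delay} equation $\bar y'=f(\bar y(t),\bar y(t-\tau^*))$ starting from the same initial history, and to show that on a sufficiently short time window the two cannot drift apart by more than a prescribed amount. Stability of $y^*$ for (\ref{13}) on $[t_0,\bar t)$ will then follow by a triangle-inequality split.

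First, fix $\epsilon>0$. Because $y^*$ is a stable equilibrium for the constant-delay equation, there exists $\delta_0>0$ such that any initial data $\eta$ with $|\eta-y^*|<\delta_0$ produces a solution $\bar y$ satisfying $|\bar y(t)-y^*|<\epsilon/2$ for all admissible $t$. Next, writing both solutions in integral form and subtracting, I would obtain
\begin{equation}
y(t)-\bar y(t)=\int_{t_0}^{t}\!\bigl[f(y(s),y(s-\tau(s)))-f(\bar y(s),\bar y(s-\tau^*))\bigr]\,ds.\nonumber
\end{equation}
The integrand will be split as
\begin{equation}
\bigl[f(y(s),y(s-\tau(s)))-f(y(s),y(s-\tau^*))\bigr]+\bigl[f(y(s),y(s-\tau^*))-f(\bar y(s),\bar y(s-\tau^*))\bigr].\nonumber
\end{equation}
By the hypothesis applied with $t_1=t_0$ (so $\tau(t_1)=\tau^*$), the first bracket is bounded by $\epsilon_1(s-t_0)$. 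The second bracket is controlled by the Lipschitz constant $L$ of $f$ in its arguments (inherited from the basic existence-uniqueness hypothesis), giving $L\bigl(|y(s)-\bar y(s)|+|y(s-\tau^*)-\bar y(s-\tau^*)|\bigr)$.

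Setting $u(t)=\sup_{t_0\le s\le t}|y(s)-\bar y(s)|$ and using that the history segment on $[t_0-\tau^*,t_0]$ coincides for the two solutions, the resulting inequality
\begin{equation}
u(t)\le \tfrac12\epsilon_1(t-t_0)^2+2L\int_{t_0}^{t}u(s)\,ds\nonumber
\end{equation}
yields, via Gronwall's lemma, the estimate $u(t)\le \tfrac12\epsilon_1(t-t_0)^2\,e^{2L(t-t_0)}$ valid for $t\in[t_0,t_0+c)$. Choose $\bar t\in(t_0,t_0+c)$ small enough that this bound is less than $\epsilon/2$; then for $|t_0-y^*|<\delta_0$ (and any $\delta\le\delta_0$ for which the initial history lies in the basin of the constant-delay flow),
\begin{equation}
|\phi_t(t_0)-y^*|\le |y(t)-\bar y(t)|+|\bar y(t)-y^*|<\tfrac{\epsilon}{2}+\tfrac{\epsilon}{2}=\epsilon\nonumber
\end{equation}
for all $t\in[t_0,\bar t)$, which is exactly Lyapunov stability on the finite interval.

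The main obstacle I anticipate is the delay term $|y(s-\tau^*)-\bar y(s-\tau^*)|$ in the second bracket: it is not immediately dominated by $u(s)$ on $[t_0,t]$ because the delayed argument may fall below $t_0$. The way around this is to exploit that both solutions share the same initial history on $[t_0-\tau^*,t_0]$, so that $|y(s-\tau^*)-\bar y(s-\tau^*)|=0$ whenever $s-\tau^*\le t_0$ and equals a past value of $u$ otherwise, letting the Gronwall step go through cleanly. A secondary subtlety is the implicit Lipschitz regularity of $f$ in its state arguments, which is not spelled out in the statement; I would take it as inherited from the standing hypotheses used earlier for the existence-uniqueness theorem.
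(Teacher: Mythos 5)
The paper offers no proof of this theorem: it is stated without argument, accompanied only by the remark that the results are similar to those of Deng, Wu and Li, so there is no in-paper proof to compare yours against step by step. Your overall strategy --- comparing the solution of the time-dependent equation with the solution of the frozen-delay equation started from the same history, splitting the integrand into a delay-variation part controlled by the hypothesis with $t_1=t_0$ and a state-variation part controlled by a Lipschitz constant, and closing with Gronwall --- is the natural one and is in the spirit of the cited reference. The handling of the delayed difference $|y(s-\tau^*)-\bar y(s-\tau^*)|$ via the shared initial history is also correct.

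There is, however, a genuine gap in the quantifier structure at the end. Your Gronwall bound $u(t)\le \tfrac12\epsilon_1(t-t_0)^2e^{2L(t-t_0)}$ is independent of the size $\delta$ of the initial perturbation, so for a fixed $\bar t>t_0$ it cannot be driven below $\epsilon/2$ for arbitrarily small $\epsilon$. Choosing ``$\bar t$ small enough that this bound is less than $\epsilon/2$'' therefore makes $\bar t$ a function of $\epsilon$, and what you actually establish is ``for every $\epsilon$ there exist $\bar t$ and $\delta$,'' whereas the theorem asserts ``there exists $\bar t$ such that for every $\epsilon$ there exists $\delta$'': the interval of stability must not shrink as $\epsilon$ does. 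To repair this, do not rely solely on the crude hypothesis bound $\epsilon_1|s-t_0|$ for the first bracket; instead note that $\|f(y(s),y(s-\tau(s)))-f(y(s),y(s-\tau^*))\|\le L\,|y(s-\tau(s))-y(s-\tau^*)|$, and that as long as the solution remains in an $\epsilon$-neighbourhood of $y^*$ one has $|y'|\le 2L\epsilon$ there (since $f(y^*,y^*)=0$), so this term is at most $2L^2\epsilon\,|\tau(s)-\tau^*|$, i.e.\ proportional to the distance from equilibrium. Feeding that into the same Gronwall computation, together with a bootstrap argument justifying the a priori neighbourhood, yields $u(t)\le C\epsilon(t-t_0)e^{2L(t-t_0)}$ with $C$ independent of $\epsilon$, after which $\bar t$ can be fixed once and for all by requiring $C(\bar t-t_0)e^{2L(\bar t-t_0)}<\tfrac12$. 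Finally, the Lipschitz regularity of $f$ in its state arguments, which you flag yourself, is indeed absent from the statement and must be imported explicitly from the standing hypotheses of the existence theorems.
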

\begin{cor}
If the real parts of all roots of $ \lambda -a-be^{-\lambda\tau^*}=0$ are negative, where $a =\partial_1f, b = \partial_2f$ evaluated at equilibrium. Then there exist $\epsilon_c, \bar{t}(>t_0) $, such that when $\epsilon_1 <\epsilon_c$, the solution  $y^*=0$ of Eq. ({\ref{13}}) is stable on finite time interval  $[t_0,\bar{t})$.
\end{cor}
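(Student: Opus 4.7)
The plan is to read the corollary as a direct application of the preceding theorem to the equilibrium $y^{*}=0$, the only new content being that the spectral criterion on $\lambda - a - be^{-\lambda\tau^{*}}=0$ supplies the stability hypothesis demanded by that theorem. So the first step is to establish that the frozen-delay equation $y'(t)=f(y(t),y(t-\tau^{*}))$ is (Lyapunov) stable at $y^{*}=0$; once this is in hand, the corollary's conclusion is exactly what the theorem hands back.

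For the frozen-delay stability I would invoke the standard linearized-stability principle for autonomous DDEs. Writing
\[
f(u,v)=au+bv+R(u,v),\qquad a=\partial_{1}f(0,0),\ \ b=\partial_{2}f(0,0),\ \ R(u,v)=o(|u|+|v|),
\]
the associated linear equation $z'(t)=az(t)+bz(t-\tau^{*})$ has characteristic equation $\lambda - a - be^{-\lambda\tau^{*}}=0$. By hypothesis all its roots lie in $\{\operatorname{Re}\lambda<0\}$, and since the roots of such an exponential polynomial have real parts tending to $-\infty$, they are in fact uniformly bounded away from the imaginary axis by some $\gamma>0$. Classical $C_{0}$-semigroup / spectral-mapping theory for linear autonomous delay equations then yields exponential decay of the fundamental solution, and a variation-of-constants argument absorbs the higher-order remainder $R$, producing Lyapunov (and indeed asymptotic) stability of $y^{*}=0$ for the frozen-delay equation.

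With this stability established, the preceding theorem applies verbatim: its hypotheses are precisely (i) stability of $y^{*}=0$ under the frozen delay $\tau^{*}=\tau(t_{0})$, just proved, and (ii) the Lipschitz-type bound $\|f(y(t),y(t-\tau(t)))-f(y(t),y(t-\tau(t_{1})))\|<\epsilon_{1}|t-t_{1}|$ on $[t_{0},t_{0}+c)$, built into the corollary's statement. Taking $\epsilon_{c}$ to be any threshold for which the theorem's conclusion is valid, and $\bar t$ the resulting time, yields stability of $y^{*}=0$ for Eq.\ (\ref{13}) on $[t_{0},\bar t)$ whenever $\epsilon_{1}<\epsilon_{c}$. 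The main obstacle is the linearized-stability step: one must cite (or adapt) the spectral-mapping and semigroup results for linear autonomous DDEs (for instance Hale--Verduyn Lunel), since this is the only nontrivial input beyond what the preceding theorem already provides; the rest is bookkeeping.
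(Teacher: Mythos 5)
The paper gives no proof of this corollary at all: it is stated bare, immediately after the remark that ``The following results are similar to those in'' the Deng--Wu--Li reference, so there is no argument of the authors' own to set against yours. Your sketch supplies the missing argument in the standard and essentially correct way: for a retarded equation only finitely many characteristic roots lie to the right of any vertical line, so the hypothesis that every root of $\lambda-a-be^{-\lambda\tau^*}=0$ has negative real part forces $\sup\operatorname{Re}\lambda\le-\gamma<0$, giving exponential decay of the fundamental solution of the linearization $z'=az(t)+bz(t-\tau^*)$ and, after absorbing the $o(|u|+|v|)$ remainder by variation of constants, stability of $y^*=0$ for the frozen-delay equation; that is exactly the hypothesis of the preceding theorem, which then returns the stated conclusion. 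The one point worth flagging --- a looseness the paper inherits from its own statement of the theorem --- is the quantifier on $\epsilon_1$: the theorem asserts stability when the time-perturbation bound holds for ``some $\epsilon_1>0$,'' whereas the corollary promises a threshold $\epsilon_c$, so a fully rigorous derivation needs the quantitative version of the theorem in which the admissible size of $\epsilon_1$ is tied to the decay rate $\gamma$; your choice of $\epsilon_c$ as ``any threshold for which the theorem's conclusion is valid'' presupposes that such a threshold exists rather than producing it, though this matches the level of precision of the source being paraphrased.
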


\section{Series Solution of Pantograph Equation}
A pantograph is a device used in electric trains to collect current from overloaded lines. The pantograph equation was formulated by Ockendon and Taylor in 1971 and originates in electrodynamics.\\
Consider the pantograph equation,

\begin{equation}
y'(t) = ay(t)+ by(q t),\quad y(0)= 1, \label{14}
\end{equation}
where $0<q<1$, $a, b \in {R}$.

Integrating (\ref{14}), we get
\begin{equation}
y(t)= 1 + \int_{0}^{t}\left(ay(x)+ by(qx)\right)dt\label{15}
\end{equation}
Suppose $\phi_k(t)$ be the $k^{th}$ approximate solution, where the initial approximate solution is
taken as
\begin{equation}
\phi_0(t)=1.
\end{equation}

For $k \ge 1$, the recurrent formula as below:
\begin{equation}
\phi_k (t) =1 + \int_{0}^{t}\left(a\phi_{k-1}(x)+b\phi_{k-1}(qx)\right)dx\label{16}.
\end{equation}
From the recurrent formula, we have 
\begin{eqnarray}
\phi_1(t)&=& 1+\int_{0}^{t}\left( a\phi_0(x)+ by_0(q x)\right)dx\nonumber\\
&=& 1+ (a+b)\frac{t}{1!},\nonumber\\
\phi_2(t)&=& 1+ \int_{0}^{t}\left( a\phi_1(x)+ b\phi_1(q x)\right)dt\nonumber\\
&=& 1+(a+b)\frac{t}{1!}+(a+b)(a+bq)\frac{t^2}{2!},\nonumber\\
\phi_3(t)&=& 1 +\int_{0}^{t}\left(a\phi_2(t)+ b\phi_2(q t)\right)dt\nonumber\\
&=& 1+(a+b)\frac{t}{1!}+(a+b)(a+bq)\frac{t^2}{2!} (a+b)(a+bq)(a+bq^2)\frac{t^3}{3!},\nonumber\\
&\vdots& \nonumber\\
\phi_k(x)&=& 1+\frac{t^k}{k!}\prod_{j=0}^{k-1}\left(a+bq^j\right), \quad k=1,2,3\cdots. \nonumber\\
&&\text{As} \quad k\rightarrow\infty,\quad \phi_k (t)\rightarrow y(t)\nonumber\\
y(t) &=&  1 + \sum_{m=1}^\infty \frac{t^m}{m!}\prod_{j=0}^{m-1}\left(a+bq^j\right).\nonumber
\end{eqnarray}
If we define $\prod_{j=0}^{m-1}\left(a+bq^j\right)=1$, for $m=0$,
then
\begin{equation}
		y(t) = \sum_{m=0}^\infty \frac{t^m}{m!}\prod_{j=0}^{m-1}\left(a+bq^j\right).\label{17}
\end{equation}

\begin{thm}
	For $0<q<1$, the  power series (\ref{17})	is  convergent for $t\in{R}$.
\end{thm}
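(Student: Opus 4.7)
The plan is to establish convergence by a direct comparison with the exponential series, using that $0<q<1$ bounds each factor in the product uniformly. Write the general term as
\begin{equation}
c_m(t) = \frac{t^m}{m!}\prod_{j=0}^{m-1}(a+bq^j),\qquad m\geq 0,
\end{equation}
with the convention $c_0(t)=1$.

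First I would observe that, since $0<q<1$, we have $0<q^j\leq 1$ for every $j\geq 0$, so
\begin{equation}
|a+bq^j|\leq |a|+|b|\,q^j \leq |a|+|b|.
\end{equation}
Multiplying these bounds across the product of length $m$ gives
\begin{equation}
\left|\prod_{j=0}^{m-1}(a+bq^j)\right|\leq (|a|+|b|)^m.
\end{equation}

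Next I would feed this into the series term-by-term to obtain
\begin{equation}
|c_m(t)| \leq \frac{\bigl((|a|+|b|)\,|t|\bigr)^m}{m!},
\end{equation}
and then compare with the exponential series:
\begin{equation}
\sum_{m=0}^\infty |c_m(t)|\;\leq\;\sum_{m=0}^\infty \frac{\bigl((|a|+|b|)\,|t|\bigr)^m}{m!}\;=\;e^{(|a|+|b|)\,|t|}.
\end{equation}
Since the right-hand side is finite for every $t\in\mathbb{R}$, the series defining $y(t)$ is absolutely (and, on any compact subset of $\mathbb{R}$, uniformly) convergent, and hence convergent on all of $\mathbb{R}$.

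There is essentially no obstacle here: the only step that requires any thought is the uniform bound on the product factors, which is immediate from $0<q<1$. As an alternative presentation one could invoke the ratio test instead, noting that
\begin{equation}
\left|\frac{c_{m+1}(t)}{c_m(t)}\right|=\frac{|a+bq^m|\,|t|}{m+1}\leq\frac{(|a|+|b|)\,|t|}{m+1}\longrightarrow 0
\end{equation}
as $m\to\infty$, which again yields convergence for every $t\in\mathbb{R}$; but the comparison with $e^{(|a|+|b|)|t|}$ has the advantage of also producing an explicit majorant, which will be useful if uniform convergence on compact subsets is later needed (for instance to justify differentiating termwise and recovering \eqref{14}).
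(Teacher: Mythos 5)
Your proof is correct. The paper itself states this theorem without giving a proof at all; the only hint of the intended argument appears much later, where the analogous convergence theorem for the system of fractional Ambartsumian equations is dispatched with the single line ``Result follows immediately by ratio test.'' Your primary route --- bounding each factor by $|a+bq^j|\leq |a|+|b|$ (valid since $0<q^j\leq 1$) and comparing termwise with the series for $e^{(|a|+|b|)|t|}$ --- is a legitimate alternative that actually buys more: it produces an explicit majorant, and it immediately delivers the paper's subsequent corollary on absolute convergence and uniform convergence on compact sets, which the bare ratio test does not. Your closing remark that the ratio test also works aligns with the paper's (implied) approach; the only microscopic caveat there is that the ratio $c_{m+1}/c_m$ is undefined if some factor $a+bq^j$ vanishes, in which case the series terminates and convergence is trivial --- your comparison argument sidesteps even that degenerate case. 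No gaps.
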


\begin{cor}
	The power series (\ref{17}) is  absolutely convergent for all $t$ and hence it is uniformly convergent on any compact interval on ${R}$.
\end{cor}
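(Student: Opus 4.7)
The plan is to upgrade the pointwise convergence established in the preceding theorem to absolute and uniform convergence by exhibiting a concrete dominating majorant for the series. First I would observe that since $0<q<1$, one has $|q^j|\leq 1$ for every $j\geq 0$, so the elementary bound
\begin{equation*}
\left|\prod_{j=0}^{m-1}(a+bq^j)\right|\leq \prod_{j=0}^{m-1}\bigl(|a|+|b||q^j|\bigr)\leq (|a|+|b|)^m
\end{equation*}
holds uniformly in $m$. Setting $C:=|a|+|b|$, this means the $m$-th term of the series in~(\ref{17}) is bounded in absolute value by $\frac{(C|t|)^m}{m!}$.

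Next I would prove absolute convergence for every fixed $t\in\mathbb{R}$. The cleanest route is the ratio test: with $u_m:=\frac{t^m}{m!}\prod_{j=0}^{m-1}(a+bq^j)$, one has
\begin{equation*}
\frac{|u_{m+1}|}{|u_m|}=\frac{|t|\,|a+bq^m|}{m+1}\leq \frac{C|t|}{m+1}\longrightarrow 0
\end{equation*}
as $m\to\infty$, so the series $\sum |u_m|$ converges for every $t\in\mathbb{R}$. Alternatively one can appeal directly to comparison with $\sum \frac{(C|t|)^m}{m!}=e^{C|t|}$, which is the majorant computed above.

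For uniform convergence on a compact interval, I would fix an arbitrary $K>0$ and work on $[-K,K]$. The previous estimate yields, for every $t$ in this interval,
\begin{equation*}
\left|\frac{t^m}{m!}\prod_{j=0}^{m-1}(a+bq^j)\right|\leq \frac{(CK)^m}{m!}=:M_m,
\end{equation*}
and $\sum_{m=0}^\infty M_m=e^{CK}<\infty$. The Weierstrass $M$-test then delivers uniform convergence of~(\ref{17}) on $[-K,K]$, and since $K>0$ was arbitrary this gives uniform convergence on every compact subset of $\mathbb{R}$.

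There is essentially no obstacle here beyond making the uniform-in-$m$ bound $|a+bq^j|\leq|a|+|b|$ explicit; once that is in hand, both assertions reduce to standard comparison with the exponential series. The only point that deserves care is to keep $C$ independent of both $m$ and $t$, so that the majorant $\frac{(CK)^m}{m!}$ depends only on $m$ when $t$ is restricted to $[-K,K]$, which is precisely what the $M$-test requires.
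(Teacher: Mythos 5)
Your proof is correct and follows essentially the same route the paper intends: the paper leaves this corollary unproved (its analogous results are dispatched with ``follows immediately by ratio test''), and your argument is the ratio-test/exponential-comparison proof written out in full. The uniform bound $|a+bq^{j}|\leq |a|+|b|$ together with the Weierstrass $M$-test on $[-K,K]$ correctly supplies the uniform-convergence half of the claim, which the paper asserts but does not justify.
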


\begin{thm}
If $0<q<1$, $a, b\ge 0$, then
\begin{equation}
 e^{at}\le y(t) =\sum_{m=0}^\infty \frac{t^m}{m!}\prod_{j=0}^{m-1}\left(a+bq^j\right)\le e^{(a+b+c)t},\quad 0 \le t < \infty.\nonumber
\end{equation}
\end{thm}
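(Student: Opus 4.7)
The plan is to prove the two inequalities by bounding each factor in the product $\prod_{j=0}^{m-1}\left(a+bq^j\right)$ term-by-term and then recognizing the resulting series as exponentials. (I read the upper bound $e^{(a+b+c)t}$ as a typographical slip for $e^{(a+b)t}$, since no constant $c$ is introduced anywhere, and the natural sharp bound that the argument below yields is exactly $e^{(a+b)t}$.)

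For the lower bound, since $a, b \ge 0$ and $q^j \ge 0$ for $0 < q < 1$, every factor satisfies $a + bq^j \ge a$. Therefore, for each $m \ge 0$,
\begin{equation*}
\prod_{j=0}^{m-1}\left(a+bq^j\right) \ge a^m,
\end{equation*}
with the convention that the empty product equals $1$. Since $t \ge 0$, summing the nonnegative series term-by-term gives
\begin{equation*}
y(t) = \sum_{m=0}^\infty \frac{t^m}{m!}\prod_{j=0}^{m-1}\left(a+bq^j\right) \ge \sum_{m=0}^\infty \frac{(at)^m}{m!} = e^{at}.
\end{equation*}

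For the upper bound, use that $0 < q < 1$ implies $q^j \le 1$ for every $j \ge 0$, so $a + bq^j \le a + b$. Hence
\begin{equation*}
\prod_{j=0}^{m-1}\left(a+bq^j\right) \le (a+b)^m,
\end{equation*}
and, again since every term is nonnegative and $t \ge 0$,
\begin{equation*}
y(t) \le \sum_{m=0}^\infty \frac{\left((a+b)t\right)^m}{m!} = e^{(a+b)t}.
\end{equation*}

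The combined inequalities $e^{at} \le y(t) \le e^{(a+b)t}$ then hold for all $0 \le t < \infty$. No obstacle really arises: the main steps (bounding each factor above by $a+b$ and below by $a$, then exchanging the bound into the summation) are legitimate precisely because $a, b \ge 0$ and $t \ge 0$ make every quantity in sight nonnegative, so no sign tracking is needed. The only point worth noting explicitly is that absolute convergence of the defining series for $y(t)$, established in the preceding corollary, justifies all the term-wise comparisons.
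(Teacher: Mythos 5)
The paper states this theorem without any proof at all, so there is no argument of the authors' to compare against; your proof stands on its own and is correct. Bounding each factor by $a \le a+bq^j \le a+b$ (valid since $a,b\ge 0$ and $0<q^j\le 1$), taking products of nonnegative factors, and comparing the nonnegative-term series with the exponential series is exactly the standard argument, and your reading of $e^{(a+b+c)t}$ as a typographical slip for $e^{(a+b)t}$ is well supported: the constant $c$ is never introduced in this context, and the paper's fractional analogue of this result states the upper bound as $E_\alpha\left((a+b)t^\alpha\right)$ with no extra constant. (Even if one insisted on keeping an arbitrary $c>0$, your bound $y(t)\le e^{(a+b)t}\le e^{(a+b+c)t}$ for $t\ge 0$ still delivers the stated inequality.) Your closing remark about absolute convergence is a nicety rather than a necessity here, since for series with nonnegative terms the term-by-term comparison already forces convergence of the smaller series from that of the larger.
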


\begin{thm}
If $(a+b)< 0$ then zero solution of ({\ref{14}}) is asymptotically stable.
\end{thm}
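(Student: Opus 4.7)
The plan rests on the linearity of equation (\ref{14}) and the explicit series representation (\ref{17}). Because $f(t,0,0)=0$, the constant function $y\equiv 0$ is an equilibrium of (\ref{14}), and by uniqueness every solution with $y(0)=y_0$ satisfies $y(t)=y_0\Psi(t)$, where $\Psi$ is the normalized solution with $\Psi(0)=1$ given by (\ref{17}). Consequently, Lyapunov stability at the origin is equivalent to the bound $\sup_{t\ge 0}|\Psi(t)|<\infty$ (one then takes $\delta=\varepsilon/\sup|\Psi|$ in the Definition of Section 5), while asymptotic stability is equivalent to $\Psi(t)\to 0$ as $t\to\infty$.

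To analyze $\Psi$, I would pass to the variation-of-parameters form
\begin{equation*}
\Psi(t)=e^{at}+b\int_0^t e^{a(t-s)}\Psi(qs)\,ds,
\end{equation*}
obtained from $\Psi'(t)-a\Psi(t)=b\Psi(qt)$ via the integrating factor $e^{-at}$. This identity displays the balance between the homogeneous factor $e^{at}$ and the feedback from $\Psi$ evaluated at the compressed arguments $qs\le s$. Introducing the running supremum $\Phi(t):=\sup_{0\le s\le t}|\Psi(s)|$ and using $|\Psi(qs)|\le\Phi(s)$, the identity yields the self-referential inequality $\Phi(t)\le e^{at}+|b|\int_0^t e^{a(t-s)}\Phi(s)\,ds$ (when $a<0$, with an analogous treatment otherwise). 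A generalized Gronwall argument, tuned so that the effective rate is $a+b$, should then give $\Phi(t)\le Ce^{(a+b)t}$ for some constant $C>0$, from which both required properties follow.

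The main obstacle is making the Gronwall step actually produce genuine exponential decay at rate $a+b$, rather than merely boundedness or the weaker rate $a+|b|$. Because the delay $s-qs=(1-q)s$ is unbounded in $s$, classical Razumikhin and Halanay arguments do not apply directly, and the naive supremum estimate above loses the sign information in $b$. I would therefore partition the time axis into the geometric windows $I_k=[q^{-k},q^{-(k+1)})$, on which the compressed argument $qs$ lies entirely in the preceding window $I_{k-1}$, and propagate a decay bound from window to window by induction, extracting the cancellation $a+b$ at each step from the fact that the series coefficients $\prod_{j=0}^{m-1}(a+bq^j)$ concentrate their dependence near $j=0$ where the factor equals precisely $a+b$. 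Once exponential decay of $\Psi$ is established, the scaling $y(t)=y_0\Psi(t)$ delivers both Lyapunov stability and $|\phi_t(y_0)|\to 0$, completing the proof.
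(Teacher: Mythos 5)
There is a genuine gap, and you have in fact named it yourself: everything hinges on extracting the decay rate $a+b$ rather than $a+|b|$ from the supremum estimate, and that step is never carried out. The window-by-window induction you sketch is only a plan, and the heuristic you offer to close it is backwards: in the product $\prod_{j=0}^{m-1}(a+bq^j)$ the factors tend to $a$ as $j\to\infty$ (since $q^j\to 0$), so for large $m$ almost all factors are near $a$, not near $a+b$. The tail of the series (\ref{17}) therefore behaves like a constant times $e^{at}$, and no rearrangement of the Gronwall step can produce decay at rate $a+b$ when $a>0>a+b$; the cancellation you hope to propagate from the $j=0$ factor simply is not there. (This also signals that the asymptotic behaviour is really controlled by $a$ and the size of $b$, so any correct argument must use more than the single hypothesis $a+b<0$ — typically a sign or magnitude restriction on $b$.) Your reduction to the normalized solution $\Psi$ and the variation-of-parameters identity are fine as far as they go, but they only deliver $|\Psi(t)|\le e^{(a+|b|)t}$, which is weaker than what the theorem asserts.

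It is also worth noting that the paper proves the statement by exactly the kind of Razumikhin--Halanay device you set aside as inapplicable: it defines the running maximum $u(t)=\max_{0\le x\le t}y^2(x)$ and estimates, at points where $u$ is increasing, $\tfrac12 u'(t)=ay^2(t)+b\,y(t)y(qt)\le (a+b)u(t)$, whence $u(t)\le u(0)e^{2(a+b)t}$. The unboundedness of the delay $(1-q)t$ is harmless here because $qt$ always lies in $[0,t]$, so the running maximum over $[0,t]$ dominates $|y(qt)|$. (That argument, too, silently uses $y(t)y(qt)\le u(t)$ with a favorable sign of $b$; with $b<0$ it only yields the rate $a+|b|$, which is the same obstruction you ran into.) So your route is genuinely different, but in its present form it does not close, and the missing step cannot be closed in the generality claimed.
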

\begin{proof}
 \begin{eqnarray}
 \text{Define}\qquad u(t)&=&\max_{0\leq x \leq t}y^2(t)\nonumber\\
\therefore\frac{1}{2}u'(t)&=& \frac{1}{2} \frac{d}{dt}(y^2(t))\nonumber\\
&=& y(t)y'(t)\nonumber\\
&=& y(t)(ay(t)+ by(q t))\nonumber\\
&=& a y^2(t) + b y(t) y(qt)\nonumber\\
&\leq& (a+b)u(t)\nonumber\\
\Rightarrow u(t)&\leq& u(0)e^{2(a+b)t}\nonumber\\
\therefore \lim\limits_{x\rightarrow\infty}y(t)&=& 0, \quad \textrm{if} \quad(a+b)< 0.\nonumber
\end{eqnarray}
\end{proof}

\subsection{Series Solution of Ambartsumian Equation}
In \cite{ambartsumian1944fluctuation} Ambartsumian derived a delay differential equation describing the fluctuations of the surface brightness in a milky way. The equation is described as:
\begin{equation}
y'(t) = -y(t)+\frac{1}{q}y\left(\frac{t}{q}\right)\label{a1}
\end{equation}
where $q>1$ and is constant for the given model.\\
The Eq.(\ref{a1}) with initial condition $y(0)=\lambda$ can be written equivalently as
\begin{equation}
y(t)= \lambda + \int_{0}^{t}\left(\frac{1}{q}y\left(\frac{x}{q}\right)-y(x)\right)dx.\label{a2}
\end{equation}
Suppose $\phi_k(t)$ be the $k^{th}$ approximate solution, where the initial approximate solution is
taken as
\begin{equation}
\phi_0(t)=\lambda.
\end{equation}
For $k \ge 1$, the recurrent formula as below:
\begin{equation}
\phi_k (t) =\lambda + \int_{0}^{t}\left(\frac{1}{q}\phi_{k-1}\left(\frac{x}{q}\right)-\phi_{k-1}(x)\right)dx\label{a4}.
\end{equation}
From the recurrent formula, we have 
\begin{eqnarray}
\phi_1 (t) &=& \lambda + \int_{0}^{t}\left(\frac{1}{q}\phi_{0}\left(\frac{x}{q}\right)-\phi_{0}(x)\right)dx\nonumber\\
&=&\lambda +\int_{0}^{t}\left(\frac{\lambda}{q}-\lambda\right)dx\nonumber\\
&=&\lambda +\left(\frac{\lambda}{q}-\lambda\right)\frac{t}{1!}\nonumber\\
&=&\left(1+\left(\frac{1}{q}-1\right)\frac{t}{1!}\right)\lambda, \nonumber\\
\phi_2 (t)&=&\lambda + \int_{0}^{t}\left(\frac{1}{q}\phi_{1}\left(\frac{x}{q}\right)-\phi_{1}(x)\right)dx\nonumber\\
&=&  \left(1+\left(\frac{1}{q}-1\right)\frac{t}{1!}+\left(\frac{1}{q}-1\right)\left(\frac{1}{q^2}-1\right)\frac{t^2}{2!}\right)\lambda,\nonumber\\
&&\vdots\nonumber\\
\phi_k (t)&=&  \left( 1 + \sum_{m=1}^k \frac{t^m}{m!}\prod_{j=1}^m\left(\frac{1}{q^j}-1\right)\right)\lambda.\nonumber\\	
&&\text{As} \quad k\rightarrow\infty,\quad \phi_k (t)\rightarrow y(t)\nonumber\\
y(t)&=& \left( 1 + \sum_{m=1}^\infty \frac{t^m}{m!}\prod_{j=1}^m\left(\frac{1}{q^j}-1\right)\right)\lambda.\nonumber
\end{eqnarray}
If we define $\prod_{j=1}^m\left(\frac{1}{q^j}-1\right)=1$, for $m=0$,
then

\begin{equation}
y(t)= \left( \sum_{m=0}^\infty \frac{t^m}{m!}\prod_{j=1}^m\left(\frac{1}{q^j}-1\right)\right)\lambda.\label{a5}
\end{equation}
\begin{thm}
	For $q>1$, the  power series (\ref{a5})	is  convergent for $t\in{R}$.
\end{thm}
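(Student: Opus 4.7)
The plan is a direct application of the ratio test to the series
\begin{equation*}
\sum_{m=0}^{\infty} a_m, \qquad a_m = \frac{t^m}{m!}\prod_{j=1}^{m}\left(\frac{1}{q^j}-1\right),
\end{equation*}
treating $\lambda$ as a constant factor that does not affect convergence. First I would write the ratio of consecutive terms explicitly, observing that the product telescopes neatly when we pass from $m$ to $m+1$:
\begin{equation*}
\frac{|a_{m+1}|}{|a_m|} \;=\; \frac{|t|}{m+1}\,\left|\frac{1}{q^{m+1}}-1\right|.
\end{equation*}

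Next I would analyze the two factors as $m\to\infty$. Since $q>1$, we have $1/q^{m+1}\to 0$, so $\left|1/q^{m+1}-1\right|\to 1$; in fact this factor is bounded by $1$ for every $m\geq 0$. The factor $|t|/(m+1)$ tends to $0$ for any fixed $t\in\mathbb{R}$. Multiplying, we obtain
\begin{equation*}
\lim_{m\to\infty}\frac{|a_{m+1}|}{|a_m|} \;=\; 0 \;<\; 1,
\end{equation*}
so by the ratio test the series converges absolutely for every $t\in\mathbb{R}$, which proves the theorem.

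There is essentially no obstacle: the only mild care needed is to note that although each individual factor $(1/q^j-1)$ is negative (so the partial products oscillate in sign), taking absolute values inside the product is harmless, and the tail factor $|1/q^{m+1}-1|$ is in fact uniformly bounded by $1$. Consequently one even gets the stronger crude bound
\begin{equation*}
|a_m| \;\leq\; \frac{|t|^m}{m!},
\end{equation*}
which immediately yields absolute and locally uniform convergence on all of $\mathbb{R}$ by comparison with the exponential series; this observation would naturally lead into a corollary analogous to the one stated for the pantograph series.
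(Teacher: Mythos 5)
Your proposal is correct and follows the same route the paper intends: the paper offers no written proof for this theorem beyond invoking the ratio test (as it does explicitly for the analogous system version), and your explicit computation of the ratio $\frac{|a_{m+1}|}{|a_m|}=\frac{|t|}{m+1}\left|\frac{1}{q^{m+1}}-1\right|\to 0$ is exactly that argument carried out. The added remark that $\left|\frac{1}{q^{j}}-1\right|<1$ gives the comparison bound $|a_m|\leq \frac{|t|^m}{m!}$ is a nice bonus that also yields the corollary on absolute and locally uniform convergence.
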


\begin{cor}
	The power series (\ref{a5}) is  absolutely convergent for all $t$ and hence it is uniformly convergent on any compact interval on ${R}$.
\end{cor}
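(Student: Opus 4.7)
The plan is to upgrade the pointwise convergence established in the preceding theorem to absolute convergence by a direct comparison with the exponential series, and then to invoke the Weierstrass $M$-test to pass from absolute convergence to uniform convergence on compact sets.

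First I would observe that since $q > 1$, each factor $\frac{1}{q^{j}}$ lies in $(0,1)$, so
\[
\left|\frac{1}{q^{j}} - 1\right| = 1 - \frac{1}{q^{j}} < 1 \qquad (j \geq 1).
\]
Consequently the finite product satisfies $\prod_{j=1}^{m}\left|\frac{1}{q^{j}} - 1\right| < 1$ for every $m \geq 1$, and trivially equals $1$ when $m=0$. This gives the termwise estimate
\[
\left|\frac{t^{m}}{m!}\prod_{j=1}^{m}\left(\frac{1}{q^{j}} - 1\right)\right| \;\leq\; \frac{|t|^{m}}{m!}.
\]
Summing over $m$, the series $\sum_{m=0}^{\infty} \frac{|t|^{m}}{m!}\prod_{j=1}^{m}\left|\frac{1}{q^{j}} - 1\right|$ is dominated by $\sum_{m=0}^{\infty}\frac{|t|^{m}}{m!} = e^{|t|}$, which is finite for every $t\in\mathbb{R}$. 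Multiplying by $|\lambda|$ then establishes absolute convergence of the series (\ref{a5}) for all $t\in\mathbb{R}$.

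For the second assertion, fix any compact interval $K \subset \mathbb{R}$ and let $R := \max_{t\in K}|t|$. For $t\in K$ the above termwise estimate yields
\[
\left|\lambda\,\frac{t^{m}}{m!}\prod_{j=1}^{m}\left(\frac{1}{q^{j}} - 1\right)\right| \;\leq\; |\lambda|\,\frac{R^{m}}{m!} =: M_{m},
\]
and $\sum_{m=0}^{\infty} M_{m} = |\lambda|\,e^{R} < \infty$. The Weierstrass $M$-test therefore gives uniform convergence of (\ref{a5}) on $K$. There is no real obstacle here: the only subtlety is remembering that $q > 1$ forces $\left|\frac{1}{q^{j}}-1\right| < 1$, which is what makes the crude comparison with $e^{|t|}$ work uniformly in $m$; the rest is a textbook application of the $M$-test.
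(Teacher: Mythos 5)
Your proof is correct. Note that the paper itself supplies no written argument for this corollary (the analogous convergence results elsewhere in the paper are dispatched with the single line ``Result follows immediately by ratio test''), so the implicit route the authors take is: apply the ratio test to the terms $\frac{t^m}{m!}\prod_{j=1}^m\left(\frac{1}{q^j}-1\right)$, conclude the power series has infinite radius of convergence, and then invoke the standard facts that a power series converges absolutely inside its radius of convergence and uniformly on compact subsets of the interior. Your route is genuinely different and more self-contained: the observation that $q>1$ forces $\bigl|\frac{1}{q^j}-1\bigr|=1-\frac{1}{q^j}<1$ gives the uniform-in-$m$ bound $\prod_{j=1}^m\bigl|\frac{1}{q^j}-1\bigr|\le 1$, so each term is dominated by $|\lambda|\,|t|^m/m!$ and everything — convergence, absolute convergence, and uniform convergence on compacta via the Weierstrass $M$-test — follows at once from comparison with $e^{|t|}$, without needing the preceding theorem or any general theory of power series. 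What your approach buys is a one-step proof of both the theorem and the corollary with an explicit majorant $|\lambda|e^{|t|}$; what the ratio-test approach buys is that it generalizes unchanged to the fractional-order analogue (\ref{25}), where the terms involve $\Gamma(\alpha m+1)$ rather than $m!$ and the crude comparison with the ordinary exponential series must be replaced by comparison with a Mittag--Leffler function. One cosmetic remark: your symbol $R$ for $\max_{t\in K}|t|$ collides with the paper's use of $R$ for the rectangle of definition; rename it to avoid confusion.
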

\begin{thm}
The zero solution of ({\ref{a1}}) is asymptotically stable.
\end{thm}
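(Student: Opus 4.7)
The plan is to mirror the Lyapunov-style argument used in the preceding stability theorem for the pantograph equation, adapted to the Ambartsumian equation where the delayed argument $t/q$ satisfies $t/q \le t$ for $t \ge 0$ since $q>1$. Define
\[
u(t)=\max_{0\le x\le t} y^{2}(x),
\]
which is continuous and nondecreasing, and by construction $y^{2}(t)\le u(t)$ and $y^{2}(t/q)\le u(t)$ for all $t\ge 0$.

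Next I would compute, at points where the upper Dini derivative of $u$ is strictly positive (so $u(t)=y^{2}(t)$ there),
\[
\tfrac{1}{2}u'(t)=y(t)y'(t)=-y^{2}(t)+\tfrac{1}{q}\,y(t)\,y(t/q),
\]
using equation (\ref{a1}). An application of the AM--GM inequality gives $|y(t)\,y(t/q)|\le \tfrac{1}{2}(y^{2}(t)+y^{2}(t/q))\le u(t)$, hence
\[
\tfrac{1}{2}u'(t)\le -u(t)+\tfrac{1}{q}u(t)=-\,\tfrac{q-1}{q}\,u(t).
\]
Since $q>1$, the coefficient $-\tfrac{q-1}{q}$ is strictly negative. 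Integrating the differential inequality (Gronwall) yields
\[
u(t)\le u(0)\,\exp\!\Bigl(-\tfrac{2(q-1)}{q}\,t\Bigr),\qquad t\ge 0.
\]

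From this, both conclusions follow immediately. For Lyapunov stability, $|y(t)|\le\sqrt{u(t)}\le\sqrt{u(0)}=|y(0)|=|\lambda|$, so given $\epsilon>0$ one may take $\delta=\epsilon$ in the definition. For asymptotic stability, $u(t)\to 0$ exponentially as $t\to\infty$, hence $|y(t)|\le\sqrt{u(t)}\to 0$.

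The main obstacle, and the only place requiring care, is the step that lets us pass from the pointwise identity for $y(t)y'(t)$ to a genuine differential inequality for the nonsmooth function $u$. Strictly speaking $u$ is only absolutely continuous and may be locally constant on intervals where the running maximum is not attained at the right endpoint; on such intervals $u'(t)=0$, and the desired inequality holds trivially since the right-hand side $-\tfrac{2(q-1)}{q}u(t)$ is nonpositive. On the complementary set the identity $u(t)=y^{2}(t)$ holds and the computation above applies to the upper right Dini derivative $D^{+}u$. Since $D^{+}u(t)\le-\tfrac{2(q-1)}{q}u(t)$ everywhere, the standard comparison theorem for Dini derivatives then delivers the exponential bound on $u$ rigorously, completing the proof.
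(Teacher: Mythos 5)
The paper states this theorem without proof, and your strategy of transplanting the Lyapunov-style argument from the preceding pantograph stability theorem (with $a=-1$, $b=1/q$, so $a+b=-(q-1)/q<0$) is exactly what the paper intends. You are in fact more careful than the paper, since you try to address the nonsmoothness of the running maximum. Unfortunately, that extra care exposes a genuine gap rather than closing it, and the gap is fatal to the exponential bound you derive.

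The function $u(t)=\max_{0\le x\le t}y^{2}(x)$ is \emph{nondecreasing by construction}, so the conclusion $u(t)\le u(0)\,e^{-2(q-1)t/q}$ is impossible unless $u(0)=0$; for any perturbed initial value $\lambda\neq 0$ one has $u(t)\ge u(0)=\lambda^{2}>0$ for all $t$. The error sits precisely in the step you flag as "trivial": at a point where $u(t)>y^{2}(t)$ and $u(t)>0$, you have $D^{+}u(t)=0$, and the desired inequality reads $0\le -\tfrac{2(q-1)}{q}u(t)$, whose right-hand side is \emph{strictly negative} — the inequality fails, it does not hold trivially. (The same issue occurs even where $u(t)=y^{2}(t)$ but $(y^{2})'(t)<0$: there $D^{+}u(t)=0$, not $(y^{2})'(t)$.) What your computation actually establishes is only $D^{+}u\le 0$, which combined with monotonicity gives $u\equiv u(0)$, i.e. $|y(t)|\le|\lambda|$ for all $t\ge 0$. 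That is Lyapunov stability, but it does not yield $y(t)\to 0$. Asymptotic stability needs a genuinely different ingredient — e.g. a Razumikhin/Halanay-type argument exploiting the strict inequality $1/q<1$ to drive $\limsup_{t\to\infty}y^{2}(t)$ to zero, or a direct asymptotic analysis of the explicit series solution (\ref{a5}). Note that the paper's own proof of the pantograph stability theorem contains the identical flaw, so you have faithfully reproduced the intended argument; but as written it does not prove the stated conclusion.
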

\section{Fractional order differential equations with proportional delay}
Consider the initial value problem (IVP)
\begin{eqnarray}
D^{\alpha}y(t)&=&f(t, {y}(t), {y}(qt)), 0<\alpha\leq 1, 0<q<1\nonumber\\
y(0)&=&y_0,\label{8}
\end{eqnarray}
where $D^{\alpha}$ denotes Caputo fractional derivative and $f$ is a continuous function defined on the   rectangle
\begin{equation}
R=\{|t|\leq a,|y(t)-y_0|\leq b,|y(qt)-y_0|\leq b, a>0,\, b>0\}\nonumber.
\end{equation}
\begin{thm}
A function $\phi$ is a solution of the IVP (\ref{8}) on an interval $I$ if and only if it is a solution of the integral equation 
\begin{equation}
y(t)=y_0+\int_{0}^{t} \frac{(t-x)^{\alpha-1}}{\Gamma(\alpha)}f(x, {y}(x), {y}(qx))dx \quad \text{on}\quad I. \label{9}
\end{equation}
\end{thm}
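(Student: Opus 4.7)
The plan is to mimic the argument used for the first-order case in Theorem~1 but replace ordinary differentiation/integration by the Caputo derivative $D^{\alpha}$ and the Riemann--Liouville integral $I^{\alpha}$, and to rely on the inversion formulae recorded in Section~\ref{Preliminaries}. Throughout, let $F(t):=f(t,\phi(t),\phi(qt))$, which is continuous on $I$ whenever $\phi$ is (since $f$ is continuous on $R$ and the map $t\mapsto qt$ is continuous), so both $I^{\alpha}F$ and $D^{\alpha}\phi$ will be well defined.

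For the forward direction, assume $\phi$ solves (\ref{8}). Then $D^{\alpha}\phi(t)=F(t)$ on $I$ with $\phi(0)=y_0$. Apply $I^{\alpha}$ to both sides and invoke the identity
\begin{equation*}
(I^{\alpha}D^{\alpha}\phi)(t)=\phi(t)-\sum_{k=0}^{m-1}\phi^{(k)}(0)\frac{t^{k}}{k!}
\end{equation*}
from the Preliminaries, with $m=1$ because $0<\alpha\le 1$. This collapses the sum to $\phi(0)=y_0$, giving
\begin{equation*}
\phi(t)-y_{0}=I^{\alpha}F(t)=\int_{0}^{t}\frac{(t-x)^{\alpha-1}}{\Gamma(\alpha)}f(x,\phi(x),\phi(qx))\,dx,
\end{equation*}
which is exactly (\ref{9}).

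For the converse, suppose $\phi$ satisfies (\ref{9}) on $I$. Evaluating at $t=0$ gives $\phi(0)=y_{0}$, since the Riemann--Liouville integral of a continuous function vanishes at $0$. Applying $D^{\alpha}$ to both sides of (\ref{9}), using that the Caputo derivative of the constant $y_{0}$ is zero and the left inverse property $D^{\alpha}I^{\alpha}F=F$ (valid for continuous $F$), yields
\begin{equation*}
D^{\alpha}\phi(t)=F(t)=f(t,\phi(t),\phi(qt)),
\end{equation*}
so $\phi$ solves (\ref{8}).

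The only delicate step is the converse: one must justify that the continuous function $\phi$ defined by (\ref{9}) actually lies in the domain of $D^{\alpha}$ and that $D^{\alpha}I^{\alpha}F=F$ holds pointwise. This is the main obstacle, but it is standard: continuity of $F$ (inherited from $f$ and $\phi$) is enough for $I^{\alpha}F$ to be absolutely continuous on $I$, which places it in the class $C_{-1}^{1}$ required by the Caputo definition in Section~\ref{Preliminaries}, and the cancellation identity then follows directly from the definitions $D^{\alpha}=I^{1-\alpha}\frac{d}{dt}$ together with the semigroup property of $I^{\alpha}$. With this justification, the two implications above close the equivalence.
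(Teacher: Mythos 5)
The paper states this theorem without any proof at all (unlike its integer-order counterpart, Theorem~1, which is proved in Section~3), so there is no in-paper argument to compare against; your proposal fills that gap in exactly the way one would expect, by transplanting the Theorem~1 argument to the fractional setting via the inversion identities $\left(I^{\alpha}D^{\alpha}\phi\right)(t)=\phi(t)-\phi(0)$ and $D^{\alpha}I^{\alpha}F=F$. Both directions are structurally correct. The one place where you are slightly optimistic is the claim that continuity of $F$ alone makes $I^{\alpha}F$ absolutely continuous: for $0<\alpha<1$ the fractional integral of a merely continuous function is $\alpha$-H\"older but need not be differentiable, so the composition $I^{1-\alpha}\frac{d}{dt}$ in the paper's Caputo definition is not obviously applicable to $\phi=y_0+I^{\alpha}F$. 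The cleaner route for the converse is to use that, once $\phi(0)=y_0$ is established, the Caputo derivative of $\phi$ coincides with the Riemann--Liouville derivative of $\phi-\phi(0)=I^{\alpha}F$, and the identity $^{RL}D^{\alpha}I^{\alpha}F=F$ holds for any integrable (in particular continuous) $F$; alternatively one can impose the standard a priori assumption that a ``solution'' of (\ref{8}) means a function for which $D^{\alpha}\phi$ exists and is continuous. This is a minor repair, not a structural flaw, and given that the paper supplies no proof whatsoever, your argument is the stronger of the two.
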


\begin{thm}\label{thm5}
Let f is continuous and $|f|\leq M$ on $R$. The successive approximation 
\begin{eqnarray}
\phi_{k+1}(t)&=&y_0\nonumber\\
\phi_{k+1}(t)&=&y_0+\int_{0}^{t} \frac{(t-x)^{\alpha-1}}{\Gamma(\alpha)}f(x, {\phi_k}(x), {\phi_k}(qx))dx.\quad k=0,1,2,\cdots.\label{10}
\end{eqnarray}

 exist and continuous on the interval $I=[-\zeta,\zeta]$, where $\zeta=\text{min}\left\{a,(\frac{\Gamma(\alpha+1)b}{M})^\frac{1}{\alpha}\right\}$.  
	If $t\in I$  then $\left(t,y(t),y(qt)\right)\in R$ and $|\phi_{k}(t)-y_0|\leq M\frac{|t|^\alpha}{\Gamma(\alpha+1)}$, $|\phi_{k}(qt)-y_0|\leq M\frac{|t|^\alpha}{\Gamma(\alpha+1)}$.
\end{thm}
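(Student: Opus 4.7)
The plan is to mirror the proof of Theorem \ref{thm2} almost verbatim, with the ordinary integral replaced by the Riemann--Liouville integral of order $\alpha$, and to argue by induction on $k$. The only genuinely new ingredient is the explicit computation of the fractional integral of a constant, which supplies the $|t|^\alpha/\Gamma(\alpha+1)$ factor in place of the $|t|$ appearing in the integer-order case.

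First I would dispose of the base case $k=0$: since $\phi_0(t)=y_0$, continuity is immediate and $|\phi_0(t)-y_0|=0\le M|t|^\alpha/\Gamma(\alpha+1)$, with the same bound holding at $qt$. Next, for the inductive step, I would assume the statement for $k=n$, so that $(x,\phi_n(x),\phi_n(qx))\in R$ for every $x\in I$. Because $f$ is continuous on $R$ and $(x,\phi_n(x),\phi_n(qx))$ is continuous in $x$, the integrand in \eqref{10} is continuous; the weak singularity $(t-x)^{\alpha-1}$ is integrable for $\alpha>0$, so $\phi_{n+1}(t)$ is well-defined and continuous on $I$.

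The estimate is the heart of the argument. Using $|f|\le M$ and the identity
\begin{equation*}
\frac{1}{\Gamma(\alpha)}\int_{0}^{|t|}(|t|-x)^{\alpha-1}\,dx=\frac{|t|^{\alpha}}{\Gamma(\alpha+1)},
\end{equation*}
I would bound
\begin{equation*}
|\phi_{n+1}(t)-y_0|\le \frac{1}{\Gamma(\alpha)}\left|\int_0^t (t-x)^{\alpha-1}|f(x,\phi_n(x),\phi_n(qx))|\,dx\right|\le \frac{M|t|^{\alpha}}{\Gamma(\alpha+1)}.
\end{equation*}
The choice $\zeta=\min\{a,(\Gamma(\alpha+1)b/M)^{1/\alpha}\}$ then forces $M|t|^{\alpha}/\Gamma(\alpha+1)\le b$ on $I$, so $(t,\phi_{n+1}(t))$ stays inside the vertical slab of $R$. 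For the delayed argument, I would use $0<q<1$, which gives $|qt|^{\alpha}\le |t|^{\alpha}$, hence $|\phi_{n+1}(qt)-y_0|\le M|qt|^{\alpha}/\Gamma(\alpha+1)\le M|t|^{\alpha}/\Gamma(\alpha+1)\le b$; this places $(t,\phi_{n+1}(t),\phi_{n+1}(qt))$ in $R$ and closes the induction.

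The main obstacle, such as it is, is bookkeeping rather than conceptual: I have to be careful that the formula for the fractional integral of a constant $\int_0^t (t-x)^{\alpha-1}\,dx=t^\alpha/\alpha$ is used with absolute values so that the inequality is valid on the full symmetric interval $[-\zeta,\zeta]$, and I need the admissibility condition $\zeta\le (\Gamma(\alpha+1)b/M)^{1/\alpha}$ to convert the $M|t|^\alpha/\Gamma(\alpha+1)$ bound into the membership $(t,\phi_k(t),\phi_k(qt))\in R$ required for the next step of the induction. Everything else, including continuity of $\phi_{k+1}$ on $I$, follows from standard properties of the Riemann--Liouville integral recalled in Section \ref{Preliminaries}.
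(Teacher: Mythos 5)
Your proof is correct and takes exactly the route the paper intends: the paper states Theorem~\ref{thm5} without proof, and your argument is the verbatim adaptation of the paper's inductive proof of Theorem~\ref{thm2}, with $M|t|$ replaced by $M|t|^{\alpha}/\Gamma(\alpha+1)$ via the fractional integral of a constant and the modified radius $\zeta=\min\{a,(\Gamma(\alpha+1)b/M)^{1/\alpha}\}$ doing the work of $\min\{a,b/M\}$. Your explicit care with absolute values on the symmetric interval $[-\zeta,\zeta]$ (where the kernel $(t-x)^{\alpha-1}$ is delicate for $t<0$) is, if anything, more scrupulous than the paper's own treatment of the integer-order case.
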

\begin{thm}(Existence Theorem)
Let f is continuous and $|f|\leq M$ on the rectangle
\begin{equation}
R=\{|t|\leq a,|y(t)-y_0|\leq b,|y(qt)-y_0|\leq b, a>0,\, b>0\}\nonumber.
\end{equation}
Suppose $f$ satisfies Lipschitz condition in second and third variable with  Lipschitz constants $L_1$ and $L_2$ such that 
\begin{equation}
|f(t, {y_1}(t), {y_1}(qt))-f(t, {y_2}(t), {y_2}(qt))|\leq L_1|y_1(t)-y_2(t)|+L_1|y_1(qt)-y_2(qt)|\nonumber.
\end{equation}
Then the successive approximations (\ref{10}) converges on the interval  $I=[-\zeta,\zeta]$, where $\zeta=\text{min}\left\{a,(\frac{\Gamma(\alpha+1)b}{M})^\frac{1}{\alpha}\right\}$ to a solution $\phi$ of the IVP  (\ref{8}) on $I$.
\end{thm}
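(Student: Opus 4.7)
The plan is to lift the integer-order argument (proof of Theorem~4) to the fractional setting, replacing ordinary integrals by the Riemann--Liouville fractional integral $I^\alpha$ and the geometric $|t|^n/n!$ bounds by the corresponding Mittag--Leffler terms $|t|^{n\alpha}/\Gamma(n\alpha+1)$. As before, I would write $\phi_k(t)=\phi_0(t)+\sum_{n=1}^{k}[\phi_n(t)-\phi_{n-1}(t)]$, so that proving convergence of $\{\phi_k\}$ reduces to establishing absolute and uniform convergence of this telescoping series on $I$. Theorem~\ref{thm5} already gives that each $\phi_n$ is defined and continuous on $I$, and supplies the base estimate $|\phi_1(t)-\phi_0(t)|\leq M|t|^\alpha/\Gamma(\alpha+1)$.

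The inductive step is the main computation. I would prove by induction that
$$|\phi_n(t)-\phi_{n-1}(t)|\leq M(L_1+L_2)^{n-1}\frac{|t|^{n\alpha}}{\Gamma(n\alpha+1)}.$$
Given this for some $n$, I subtract the recurrences (\ref{10}) for $\phi_{n+1}$ and $\phi_n$, apply the Lipschitz hypothesis under the integral, and bound
$$|\phi_{n+1}(t)-\phi_n(t)|\leq \int_0^{|t|}\frac{(|t|-x)^{\alpha-1}}{\Gamma(\alpha)}\,(L_1+L_2)\,M(L_1+L_2)^{n-1}\frac{x^{n\alpha}}{\Gamma(n\alpha+1)}\,dx,$$
where I have used $0<q<1$ to estimate $|qx|^{n\alpha}\leq |x|^{n\alpha}$ and thereby absorb both the non-delayed and the delayed contributions into a single power of $x$. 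The remaining integral is the fractional integral of a monomial, $I^\alpha x^{n\alpha}=\Gamma(n\alpha+1)\,x^{(n+1)\alpha}/\Gamma((n+1)\alpha+1)$, recorded in Section~\ref{Preliminaries}, which yields precisely the claimed bound with $n$ replaced by $n+1$.

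The majorant series $\sum_{n=0}^\infty M(L_1+L_2)^{n-1}|t|^{n\alpha}/\Gamma(n\alpha+1)$ equals, up to the constant $M/(L_1+L_2)$, the Mittag--Leffler series $E_\alpha\bigl((L_1+L_2)|t|^\alpha\bigr)$, which is entire. Hence by the Weierstrass $M$-test the telescoping series converges absolutely and uniformly on $I$, so $\phi_k\to\phi$ uniformly, with $\phi$ continuous on $I$. Passing to the limit inside the recurrence (\ref{10}) is legitimate because $f$ is continuous and the Lipschitz bound gives a uniform error estimate on the integrand; the limit identity is exactly the integral equation (\ref{9}), which by the preceding equivalence theorem identifies $\phi$ as a solution of the IVP (\ref{8}). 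The main obstacle is the inductive bound itself: one must simultaneously absorb the delayed term $\phi_n(qx)$ into the same $x^{n\alpha}$ factor and align the explicit value of $I^\alpha x^{n\alpha}$ with the Mittag--Leffler majorant; once those pieces fit, the remainder of the argument is formal.
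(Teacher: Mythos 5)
The paper states this fractional existence theorem without proof, and your argument is precisely the natural lift of the paper's proof of the integer-order existence theorem: the same telescoping decomposition, the same inductive Lipschitz estimate with $|t|^{n}/n!$ replaced by $|t|^{n\alpha}/\Gamma(n\alpha+1)$ via the formula $I^\alpha x^{n\alpha}=\frac{\Gamma(n\alpha+1)}{\Gamma((n+1)\alpha+1)}x^{(n+1)\alpha}$, and the exponential majorant replaced by $\frac{M}{L_1+L_2}E_\alpha\left((L_1+L_2)|t|^\alpha\right)$; the induction and the absorption of the delayed term via $0<q<1$ both check out. You in fact go slightly beyond the paper's template by justifying the passage to the limit inside the integral recurrence, a step the paper omits even in the integer-order case.
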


\subsection{Series Solution of Fractional Order Pantograph Equation}	
Consider the  fractional order pantograph equation as :

	\begin{equation}
	D^\alpha y(t) = ay(t)+ by(q t),\quad y(0)= 1, \label{19}
	\end{equation}
	where  $0<\alpha \leq 1$, $ 0<q<1$, $a, b \in{R}$.\\
	
	The solution of (\ref{19}) using successive approximation is
	
	\begin{equation}
	y(t) =  \sum_{m=0}^\infty \frac{t^{\alpha m}}{\Gamma{(\alpha m + 1)}}\prod_{j=0}^{m-1}\left(a+bq^{\alpha j}\right).\label{20}
	\end{equation}
	\begin{thm}
	If $0<q<1$, then the power series (\ref{20}) is convergent for all finite values of $t$.
	\end{thm}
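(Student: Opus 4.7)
My plan is to establish absolute convergence everywhere by a direct term-by-term comparison with a Mittag-Leffler series, which Definition~2.3 already guarantees is entire.

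First, I would bound the delay-product uniformly. Since $0<q<1$ and $\alpha>0$, each factor $q^{\alpha j}\in(0,1]$, so the triangle inequality gives
\begin{equation}
\left|\prod_{j=0}^{m-1}\bigl(a+bq^{\alpha j}\bigr)\right|\le \prod_{j=0}^{m-1}\bigl(|a|+|b|\,q^{\alpha j}\bigr)\le\bigl(|a|+|b|\bigr)^m\nonumber
\end{equation}
for every $m\ge 0$. This is the only place where the hypothesis $0<q<1$ enters in an essential way, and the bound is independent of $m$ inside each factor, which is what makes the subsequent comparison work.

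Next, I would apply this bound to the general term of (\ref{20}): writing $c=|a|+|b|$, the $m$th term satisfies
\begin{equation}
\left|\frac{t^{\alpha m}}{\Gamma(\alpha m+1)}\prod_{j=0}^{m-1}\bigl(a+bq^{\alpha j}\bigr)\right|\le\frac{\bigl(c\,|t|^{\alpha}\bigr)^{m}}{\Gamma(\alpha m+1)}.\nonumber
\end{equation}
Summing over $m\ge 0$, the majorant is exactly the Mittag-Leffler series $E_{\alpha}\!\bigl(c\,|t|^{\alpha}\bigr)$ of Definition~2.3, which is entire in its argument and therefore finite for every fixed $t\in\mathbb{R}$. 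By the Weierstrass $M$-test (or direct comparison), the series (\ref{20}) converges absolutely for all $t\in\mathbb{R}$, and in fact uniformly on every compact subset of $\mathbb{R}$, which is all the theorem asserts.

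I do not foresee a genuine obstacle here; the only subtlety is that a naive ratio test on $a_{m+1}/a_m$ forces one to invoke the asymptotic $\Gamma(\alpha m+1)/\Gamma(\alpha(m+1)+1)\sim(\alpha m)^{-\alpha}$ via Stirling, which, while correct, is heavier machinery than needed. Routing the argument through $E_\alpha$ lets us reuse a function whose entirety is already built into the preliminaries, and it also immediately yields the companion statement (absolute and locally uniform convergence) in the style of the earlier corollaries following (\ref{17}) and (\ref{a5}).
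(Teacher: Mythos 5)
Your proof is correct. The uniform bound $\bigl|\prod_{j=0}^{m-1}(a+bq^{\alpha j})\bigr|\le(|a|+|b|)^m$ is valid precisely because $0<q<1$ forces $q^{\alpha j}\le 1$, and the resulting majorant $\sum_m (c|t|^\alpha)^m/\Gamma(\alpha m+1)=E_\alpha(c|t|^\alpha)$ is finite for every $t$, so comparison gives absolute convergence everywhere and the $M$-test gives local uniform convergence. This is, however, a genuinely different route from the paper's: the paper offers no argument for this particular theorem, and for the analogous convergence statements it disposes of the proof with the one-line remark ``result follows immediately by ratio test.'' The ratio-test route requires controlling $\Gamma(\alpha m+1)/\Gamma(\alpha(m+1)+1)\to 0$, i.e.\ some form of the Gamma-function asymptotics you mention, plus a small caveat when some factor $a+bq^{\alpha m}$ vanishes (the series then terminates and the ratio is undefined); your domination argument sidesteps both issues and simultaneously delivers the absolute and locally uniform convergence claimed in the companion corollaries. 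The one thing to acknowledge is that you have not eliminated the Gamma-growth input so much as encapsulated it: the entirety of $E_\alpha$ rests on exactly the same estimate. Since the paper introduces $E_\alpha$ in Definition~2.3 and already uses it as a known entire majorant (e.g.\ in the bound $E_\alpha(at^\alpha)\le y(t)\le E_\alpha((a+b)t^\alpha)$), treating its entirety as given is legitimate here, but it would be worth a sentence or a citation. A final minor point: for non-integer $\alpha$ and $t<0$ the terms $t^{\alpha m}$ are not real, so the statement is really about $t\ge 0$ (or about $|t|$), which your bound already reflects.
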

\begin{thm}
If $ 0<q<1$, $a,b\ge 0$, then 
\begin{equation*}
 E_\alpha{(at^\alpha)}\le y(t) =  \sum_{m=0}^\infty \frac{t^{\alpha m}}{\Gamma{(\alpha m + 1)}}\prod_{j=0}^{m-1}\left(a+bq^{\alpha j}\right)\le E_\alpha{((a+b)t^\alpha)},\quad 0 \le t < \infty.
\end{equation*}
\end{thm}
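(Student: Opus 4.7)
The plan is to establish both inequalities by a term-by-term comparison of the three power series, all of which have non-negative terms (since $a,b\ge 0$, $t\ge 0$, and the $1/\Gamma(\alpha m+1)$ factors are positive). Writing
\begin{equation*}
E_\alpha(at^\alpha)=\sum_{m=0}^\infty \frac{a^m\, t^{\alpha m}}{\Gamma(\alpha m+1)},\qquad E_\alpha((a+b)t^\alpha)=\sum_{m=0}^\infty \frac{(a+b)^m\, t^{\alpha m}}{\Gamma(\alpha m+1)},
\end{equation*}
the whole proof reduces to showing, for every integer $m\ge 0$, the double inequality
\begin{equation*}
a^m \;\le\; \prod_{j=0}^{m-1}\bigl(a+bq^{\alpha j}\bigr) \;\le\; (a+b)^m,
\end{equation*}
with the empty product taken to be $1$, in agreement with the convention adopted just before formula (\ref{20}).

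For the lower bound, observe that since $b\ge 0$ and $q^{\alpha j}\ge 0$, each factor satisfies $a+bq^{\alpha j}\ge a$, so the product dominates $a^m$. For the upper bound, the hypothesis $0<q<1$ combined with $\alpha>0$ gives $0<q^{\alpha j}\le 1$ for every $j\ge 0$, hence $a+bq^{\alpha j}\le a+b$, and the product is at most $(a+b)^m$. Multiplying these term-wise inequalities by the non-negative weight $t^{\alpha m}/\Gamma(\alpha m+1)$ and summing over $m\ge 0$ yields
\begin{equation*}
\sum_{m=0}^\infty \frac{a^m t^{\alpha m}}{\Gamma(\alpha m+1)} \;\le\; \sum_{m=0}^\infty \frac{t^{\alpha m}}{\Gamma(\alpha m+1)}\prod_{j=0}^{m-1}\bigl(a+bq^{\alpha j}\bigr) \;\le\; \sum_{m=0}^\infty \frac{(a+b)^m t^{\alpha m}}{\Gamma(\alpha m+1)},
\end{equation*}
which is precisely the claimed chain $E_\alpha(at^\alpha)\le y(t)\le E_\alpha((a+b)t^\alpha)$ for $0\le t<\infty$.

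The convergence of all three series on $[0,\infty)$ is guaranteed by the preceding theorem on convergence of the series (\ref{20}), so the rearrangement of summation and the termwise comparison are justified without subtlety. There is no real obstacle here; the only point demanding care is the inequality $q^{\alpha j}\le 1$, which uses both $0<q<1$ and $\alpha j\ge 0$ (true for $j\ge 0$ and $\alpha>0$), and the treatment of the boundary case $m=0$, where both sides of the bracketed inequality collapse to the empty-product value $1$ and the resulting term is $1$ in each of the three series.
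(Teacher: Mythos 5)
Your proof is correct: the termwise comparison $a^m \le \prod_{j=0}^{m-1}\bigl(a+bq^{\alpha j}\bigr) \le (a+b)^m$ (valid because all factors are non-negative and $0<q^{\alpha j}\le 1$ for $j\ge 0$), combined with the non-negative weights $t^{\alpha m}/\Gamma(\alpha m+1)$ for $t\ge 0$, gives exactly the claimed chain of inequalities. The paper in fact states this theorem without supplying any proof, so there is nothing to compare against; your argument is the natural one and fills that gap, with the only implicit step worth making explicit being that the lower bound $\prod(a+bq^{\alpha j})\ge a^m$ uses not just $a+bq^{\alpha j}\ge a$ but also the non-negativity of every factor.
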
	

\subsection{Series Solution of Fractional Order Ambartsumian Equation}
Consider the  fractional order Ambartsumian equation as:
\begin{equation}
	D^\alpha y(t) = -y(t)+\frac{1}{q}y\left(\frac{t}{q}\right),\quad y(0)= 1\label{24}
\end{equation}
where $q>1$ and is constant for the given model.

	The solution of (\ref{24}) using successive approximation is
	
	\begin{equation}
	y(t) =  \sum_{m=0}^\infty \frac{t^{\alpha m}}{\Gamma{(\alpha m + 1)}}\prod_{j=0}^{m-1}\left(\frac{1}{q^{1+\alpha j}}-1\right).\label{25}
	\end{equation}
	\begin{thm}
	If $q>1$, then the power series (\ref{25}) is convergent for all finite values of $t$.
	\end{thm}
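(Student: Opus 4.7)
The plan is to establish absolute convergence by a direct term-by-term comparison with the Mittag-Leffler function $E_\alpha(|t|^\alpha)$, which (by the preliminaries) is entire. Write the general term of the series as
\begin{equation*}
a_m(t) = \frac{t^{\alpha m}}{\Gamma(\alpha m + 1)}\prod_{j=0}^{m-1}\left(\frac{1}{q^{1+\alpha j}} - 1\right).
\end{equation*}
The key observation is that since $q > 1$ and $\alpha > 0$, we have $0 < 1/q^{1+\alpha j} < 1$ for every $j \ge 0$, hence each factor satisfies $-1 < 1/q^{1+\alpha j} - 1 < 0$. Taking absolute values therefore gives
\begin{equation*}
\left|\frac{1}{q^{1+\alpha j}} - 1\right| = 1 - \frac{1}{q^{1+\alpha j}} < 1.
\end{equation*}

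Consequently $\left|\prod_{j=0}^{m-1}\bigl(1/q^{1+\alpha j} - 1\bigr)\right| \le 1$ uniformly in $m$, which yields the majorisation
\begin{equation*}
|a_m(t)| \le \frac{|t|^{\alpha m}}{\Gamma(\alpha m + 1)}.
\end{equation*}
Summing $m$ from $0$ to $\infty$ on the right produces $E_\alpha(|t|^\alpha)$, which, as a Mittag-Leffler function of order $\alpha$, is convergent for every finite value of $t \in \mathbb{R}$. By the Weierstrass comparison test the series (\ref{25}) converges absolutely, and hence converges, for all finite $t$; moreover the convergence is uniform on every compact subset of $\mathbb{R}$, so the limit function is continuous.

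There is no serious obstacle here: the proof rests entirely on the uniform bound $|1/q^{1+\alpha j}-1| < 1$, which is immediate from $q>1$. The only point that deserves care is verifying the sign/absolute-value estimate for every $j \ge 0$ (rather than only for large $j$), since for small $j$ the term $1/q^{1+\alpha j}$ is closest to $1$ but still strictly less than $1$; once this is observed, the majorant series is the entire function $E_\alpha(|t|^\alpha)$ and the conclusion follows at once.
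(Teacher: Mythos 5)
Your proof is correct, and it takes a different route from the paper. The paper states this theorem without an explicit proof; the only argument it supplies for the analogous convergence claims (e.g., for the system of fractional Ambartsumian equations) is the one-line remark that the result ``follows immediately by ratio test,'' i.e., one computes
\begin{equation*}
\left|\frac{a_{m+1}(t)}{a_m(t)}\right| = |t|^{\alpha}\,\frac{\Gamma(\alpha m+1)}{\Gamma(\alpha m+\alpha+1)}\left(1-\frac{1}{q^{1+\alpha m}}\right)\longrightarrow 0
\end{equation*}
as $m\to\infty$, since the Gamma-quotient decays like $(\alpha m)^{-\alpha}$ and the delay factor is bounded by $1$. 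Your argument instead majorises the series term by term against $E_\alpha(|t|^\alpha)$ using the uniform bound $|1/q^{1+\alpha j}-1|=1-1/q^{1+\alpha j}<1$ for all $j\ge 0$, which you verify correctly (this is where $q>1$ and $\alpha>0$ enter). The two approaches are equally elementary, but yours buys more: it delivers an explicit entire majorant, hence absolute convergence and uniform convergence on compact sets in one stroke — facts the paper has to state as separate corollaries in the integer-order cases. The one thing you lean on implicitly is that $\sum_{m\ge 0} z^m/\Gamma(\alpha m+1)$ is itself entire; the paper's preliminaries only define $E_\alpha$ without asserting this, and its proof ultimately rests on the same Gamma-ratio estimate the ratio test uses, so the two arguments are cousins rather than strangers. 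A final small caveat, shared by the paper: for non-integer $\alpha$ and $t<0$ the terms $t^{\alpha m}$ are not real, so ``all finite $t$'' should really be read as $t\ge 0$ (or $|t|^{\alpha m}$ throughout), which your majorant already accommodates.
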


\section{System of fractional order differential equations with proportional delay}
Consider the initial value problem (IVP)
\begin{eqnarray}
D^{\alpha_i}y_i(t)&=&f_i(t, \bar{y}(t), \bar{y}(qt)), 0<\alpha_i\leq 1, 0<q<1\nonumber\\
y_i(0)&=&^iy_0,\quad 1\leq i \leq n,\label{11}
\end{eqnarray}
where $D^{\alpha_i}$ denotes Caputo fractional derivative, $\bar{y}(t)=(y_1(t),y_2(t)\cdots,y_n(t))$,\\ $\bar{y}(qt)=(y_1(qt),y_2(qt)\cdots,y_n(qt))$ and $f=(f_1,f_2\cdots,f_n)$ is a continuous function defined on the rectangle
\begin{equation}
R=\{|t|\leq a,|y_i(t)-^iy_0|\leq b_i,|y_i(qt)-^iy_0|\leq b_i, a>0,\, b_i>0, 1\leq i \leq n\}\nonumber.
\end{equation}

\begin{thm}
A function $\bar\phi$ is a solution of the IVP (\ref{11}) on an interval $I$ if and only if it is a solution of the integral equation 
\begin{equation}
y_i(t)=^iy_0+\int_{0}^{t} \frac{(t-x)^{\alpha_i-1}}{\Gamma(\alpha_i)}f(x, \bar{y}(x), \bar{y}(qx))dx \quad \text{on}\quad I, 
\end{equation}
where $\bar{\phi}_m=\left(^1\phi_m, ^2\phi_m,\cdots, ^n\phi_m\right)$
\end{thm}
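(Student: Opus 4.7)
My plan is to prove the equivalence componentwise, since the system decouples into $n$ scalar identities of the form given in Theorem for the scalar case (the fractional-order analogue proved earlier in Section 7). For each fixed $i$ with $1 \leq i \leq n$, I will show that the $i$-th component of the IVP (\ref{11}) is equivalent to the $i$-th integral identity, exploiting the inversion properties of the Caputo derivative and the Riemann--Liouville integral recorded in Section~\ref{Preliminaries}.

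For the forward implication, suppose $\bar\phi = ({}^1\phi,\ldots,{}^n\phi)$ solves (\ref{11}) on $I$. Then for each $i$,
\begin{equation}
D^{\alpha_i}\,{}^i\phi(t) = f_i\bigl(t,\bar\phi(t),\bar\phi(qt)\bigr), \qquad {}^i\phi(0) = {}^iy_0. \nonumber
\end{equation}
Applying $I^{\alpha_i}$ to both sides and invoking the identity $(I^{\alpha_i} D^{\alpha_i} {}^i\phi)(t) = {}^i\phi(t) - {}^i\phi(0)$ (which holds with $m=1$ since $0 < \alpha_i \leq 1$), I obtain
\begin{equation}
{}^i\phi(t) - {}^iy_0 = \frac{1}{\Gamma(\alpha_i)}\int_{0}^{t}(t-x)^{\alpha_i-1} f_i\bigl(x,\bar\phi(x),\bar\phi(qx)\bigr)\,dx, \nonumber
\end{equation}
which is precisely the $i$-th component of the integral equation.

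For the converse, suppose the integral identities hold. Evaluating at $t=0$ gives ${}^i\phi(0) = {}^iy_0$, so the initial conditions are recovered automatically. Applying $D^{\alpha_i}$ to the $i$-th equation and using the fact that the Caputo derivative annihilates constants together with the inversion identity $D^{\alpha_i} I^{\alpha_i} g = g$ (valid for continuous $g$), I recover $D^{\alpha_i}\,{}^i\phi(t) = f_i(t,\bar\phi(t),\bar\phi(qt))$.

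There is no serious obstacle here: the argument is essentially a componentwise repetition of the scalar case already established in Section~7, and the only care needed is that the continuity of each $f_i$ on the rectangle $R$ justifies the Riemann--Liouville manipulations. The mildly delicate point is that the integrand $f_i(x,\bar\phi(x),\bar\phi(qx))$ must be continuous on $I$, which follows from the assumed continuity of $\bar\phi$ together with the continuity of $f_i$ on $R$ and the fact that $qI \subseteq I$ for $0<q<1$, so that $(x,\bar\phi(x),\bar\phi(qx))\in R$ whenever $x\in I$.
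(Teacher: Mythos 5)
The paper states this theorem without any proof (as it does for its scalar fractional analogue in Section~7), so there is nothing to compare line by line; the only proved model in the paper is the integer-order scalar case (Theorem~1), which argues exactly as you do: integrate for one direction, differentiate and check the initial value for the other. Your componentwise argument is correct and is the natural fractional, vector-valued extension of that proof: the forward direction via $I^{\alpha_i}D^{\alpha_i}\,{}^i\phi = {}^i\phi - {}^i\phi(0)$ with $m=1$ is exactly right, and your converse is sound provided one reads the step ``the Caputo derivative annihilates constants together with $D^{\alpha_i}I^{\alpha_i}g=g$'' as: write ${}^i\phi = {}^iy_0 + I^{\alpha_i}g$ with $g$ continuous, note $I^{\alpha_i}g(0)=0$, so $D^{\alpha_i}_{C}\,{}^i\phi = D^{\alpha_i}_{RL}\bigl({}^i\phi - {}^i\phi(0)\bigr) = D^{\alpha_i}_{RL}I^{\alpha_i}g = g$; applying the Caputo operator directly to $I^{\alpha_i}g$ for merely continuous $g$ would otherwise require justifying differentiability. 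Your observation that $qI\subseteq I$ guarantees continuity of the integrand is a point the paper glosses over even in the proved integer-order case, so including it is a genuine improvement rather than a deviation.
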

\begin{thm}
	Let $||f||=M$ on rectangle R. The successive approximation 
	\begin{eqnarray}
	^i\phi_{0}(t)&=&^iy_0, \quad i=0,1,2,\cdots.\nonumber\\
	^i\phi_{k+1}(t)&=&y_0+\int_{0}^{t} \frac{(t-x)^{\alpha_i-1}}{\Gamma(\alpha_i)}f(x, {\bar\phi_k}(x), {\bar\phi_k}(qx))dx.\quad k=0,1,2,\cdots.\label{12}
	\end{eqnarray}
	 exist and continuous on the interval $I=[-\zeta,\zeta]$, where 
	\begin{equation*}
	\zeta=\text{min}\left\{a, \left(\frac{\Gamma(\alpha_1+1)b_1}{M}\right)^{\frac{1}{\alpha_1}},\cdots, \left(\frac{\Gamma(\alpha_n+1)b_n}{M}\right)^{\frac{1}{\alpha_n}}\right\}.
	\end{equation*}
	If $t$ is in interval $I$ then $\left(t,\bar{y}_{m}(t),\bar{y}_{m}(qt)\right)$ is in rectangle R and $||\bar{y}_{m}(t)-\bar{y}(0)||\leq M\sum_{i=1}^{m}\frac{|t|^{\alpha_i}}{\Gamma(\alpha_i+1)}$, $||\bar{y}_{m}(qt)-\bar{y}(0)||\leq M\sum_{i=1}^{m}\frac{|t|^{\alpha_i}}{\Gamma(\alpha_i+1)}\, \forall m$.
\end{thm}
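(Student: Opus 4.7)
My plan is to adapt the scalar proof of Theorem \ref{thm5} componentwise. I will fix a norm on $\mathbb{R}^n$ (say $\|\bar v\|=\sum_{i=1}^n|v_i|$, so that $\|f\|\le M$ implies $|f_i|\le M$ for each $i$) and proceed by induction on the iteration index $k$. At each step I will (i) verify that $^i\phi_{k+1}$ is well-defined, (ii) check continuity on $I$, (iii) establish the componentwise sup-bound $|{}^i\phi_{k+1}(t)-{}^iy_0|\le M|t|^{\alpha_i}/\Gamma(\alpha_i+1)$, and then (iv) assemble the vector estimate by summing over $i$. The value of $\zeta$ as a multi-argument minimum is precisely what makes (iii) force the containment $(t,\bar\phi_{k+1}(t),\bar\phi_{k+1}(qt))\in R$ simultaneously for every coordinate.

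\textbf{Induction.} The base case $k=0$ is trivial since $^i\phi_0\equiv{}^iy_0$ is continuous and the bound holds with equality zero. For the inductive step, assume $\bar\phi_k$ is continuous on $I$ with $(x,\bar\phi_k(x),\bar\phi_k(qx))\in R$ for all $x\in I$. Then $f_i(x,\bar\phi_k(x),\bar\phi_k(qx))$ is a continuous bounded function of $x$ on $I$, and the kernel $(t-x)^{\alpha_i-1}/\Gamma(\alpha_i)$ is $L^1$-integrable for $0<\alpha_i\le 1$, so the defining integral for $^i\phi_{k+1}(t)$ exists. Continuity of $^i\phi_{k+1}$ on $I$ follows from a standard dominated-convergence argument applied to the weakly singular Riemann--Liouville convolution, exactly as in the scalar case.

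\textbf{A priori bound and role of $\zeta$.} Pulling the bound $|f_i|\le M$ through the integral gives
\begin{equation*}
|{}^i\phi_{k+1}(t)-{}^iy_0|\le\frac{M}{\Gamma(\alpha_i)}\left|\int_0^t(t-x)^{\alpha_i-1}dx\right|=\frac{M\,|t|^{\alpha_i}}{\Gamma(\alpha_i+1)}.
\end{equation*}
For $t\in I$ one has $|t|\le\zeta\le\bigl(\Gamma(\alpha_i+1)b_i/M\bigr)^{1/\alpha_i}$, so the right-hand side is $\le b_i$; the same estimate at $qt$ uses $|qt|\le|t|$ since $0<q<1$. Hence $(t,\bar\phi_{k+1}(t),\bar\phi_{k+1}(qt))\in R$, closing the induction. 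Summing the componentwise estimates over $i=1,\dots,n$ produces the stated norm bound (I read the upper summation limit in the conclusion as $n$, the dimension of the system).

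\textbf{Expected main obstacle.} The estimates themselves are routine once the scalar case is in hand; the only genuinely new point compared to Theorem \ref{thm5} is the coupling between components through the vector argument of $f$, which forces one to choose $\zeta$ small enough to keep \emph{every} coordinate inside its respective $b_i$-strip. This is why $\zeta$ must be taken as the simultaneous minimum over $a$ and all $n$ scalar thresholds $\bigl(\Gamma(\alpha_i+1)b_i/M\bigr)^{1/\alpha_i}$. Beyond this bookkeeping, no new analytic ideas are needed.
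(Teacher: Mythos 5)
Your proof is correct and is exactly the argument the paper intends: the paper actually states this theorem \emph{without} proof, and the induction you run is the componentwise fractional analogue of the proof given for Theorem \ref{thm2} (base case $k=0$, inductive step in which the a priori bound $M|t|^{\alpha_i}/\Gamma(\alpha_i+1)\le b_i$ is forced for every coordinate simultaneously by the definition of $\zeta$ as a minimum over all $n$ thresholds). Your reading of the upper summation limit as the dimension $n$ rather than $m$ is also the only sensible interpretation of the statement, and the remaining defect you silently inherit (the kernel $(t-x)^{\alpha_i-1}$ is not real for $t<0$, so the interval should really be $[0,\zeta]$) is present in the paper's own formulation, not introduced by you.
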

\begin{thm}
	Let $f$ be a continuous function defined on the rectangle 
	\begin{equation}
R=\{|t|\leq a,|y_i(t)-^iy_0|\leq b_i,|y_i(qt)-^iy_0|\leq b_i, a>0,\, b_i>0, 1\leq i \leq n\}\nonumber.
	\end{equation}
	Suppose $f$ satisfies Lipschitz condition in second and third variable with  Lipschitz constants $L_1$ and $L_2$ such that 
$
	|f(t, \bar{y}(t), \bar{y}(qt))-f(t, \bar{y}(t), \bar{y}(qt))|\leq L_1|\bar y_1(t)-\bar y_2(t)|+L_1|\bar y_1(qt)-\bar y_2(qt)|\nonumber.
	$
	Then the successive approximations (\ref{12}) converges on the interval  $I=[-\zeta,\zeta]$, where $\zeta=\text{min}\left\{a, \left(\frac{\Gamma(\alpha_1+1)b_1}{M}\right)^{\frac{1}{\alpha_1}},\cdots, \left(\frac{\Gamma(\alpha_n+1)b_n}{M}\right)^{\frac{1}{\alpha_n}}\right\}
		$ to a solution of the $\phi$ of the IVP  (\ref{11}) on $I$.
	\end{thm}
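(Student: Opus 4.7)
The plan is to mimic the scalar existence proof given earlier in the excerpt, but carry out the estimates componentwise and replace the single-parameter exponential/Mittag-Leffler majorant with the multi-parameter Mittag-Leffler function introduced in Section~\ref{Preliminaries}. First I would write the telescoping identity
\begin{equation*}
\bar\phi_k(t)=\bar\phi_0(t)+\sum_{n=1}^{k}\bigl[\bar\phi_n(t)-\bar\phi_{n-1}(t)\bigr],
\end{equation*}
so that uniform convergence of $\{\bar\phi_k\}$ on $I$ reduces to uniform absolute convergence of the series $\sum_n\|\bar\phi_n-\bar\phi_{n-1}\|$, where $\|\cdot\|$ denotes the sup/max norm on $\mathbb{R}^n$. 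By the preceding theorem each $\bar\phi_k$ is well defined and continuous on $I$ with $(t,\bar\phi_k(t),\bar\phi_k(qt))\in R$, so the Lipschitz hypothesis can be applied term by term.

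Next I would estimate the differences inductively. Subtracting successive relations in (\ref{12}) and using the Lipschitz condition together with $0<q<1$ (which gives $|qx|^{\alpha_i}\le |x|^{\alpha_i}$), I get for each coordinate
\begin{equation*}
|{}^i\phi_{k+1}(t)-{}^i\phi_{k}(t)|\le \frac{L_1+L_2}{\Gamma(\alpha_i)}\int_0^{|t|}(|t|-x)^{\alpha_i-1}\,\|\bar\phi_k(x)-\bar\phi_{k-1}(x)\|\,dx.
\end{equation*}
Starting from $\|\bar\phi_1(t)-\bar\phi_0(t)\|\le M\sum_{i=1}^{n}|t|^{\alpha_i}/\Gamma(\alpha_i+1)$ and using the identity $I^{\alpha_i}(t^{\beta})=\Gamma(\beta+1)t^{\beta+\alpha_i}/\Gamma(\beta+\alpha_i+1)$ from Section~\ref{Preliminaries}, a straightforward induction shows
\begin{equation*}
\|\bar\phi_{k+1}(t)-\bar\phi_{k}(t)\|\le M(L_1+L_2)^{k}\!\!\sum_{\substack{l_1+\cdots+l_n=k+1\\ l_j\ge 0}}(k+1;l_1,\ldots,l_n)\frac{\prod_{j=1}^{n}|t|^{\alpha_j l_j}}{\Gamma\!\bigl(1+\sum_{j}\alpha_j l_j\bigr)}.
\end{equation*}

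I would then recognize the right-hand side as the $(k+1)$th homogeneous block of the multi-parameter Mittag-Leffler function
$E_{(\alpha_1,\ldots,\alpha_n),1}\bigl((L_1+L_2)|t|^{\alpha_1},\ldots,(L_1+L_2)|t|^{\alpha_n}\bigr)$, which is an entire function of each argument. This provides a convergent numerical majorant independent of $k$ on the compact interval $I$, yielding uniform absolute convergence of the telescoping series. Denote the limit by $\bar\phi$; it is continuous on $I$ with values in $R$. Finally, passing to the limit in (\ref{12}) is justified by the Lipschitz estimate (which gives $\|f(x,\bar\phi_k(x),\bar\phi_k(qx))-f(x,\bar\phi(x),\bar\phi(qx))\|\to 0$ uniformly on $I$) and dominated convergence against the integrable kernel $(t-x)^{\alpha_i-1}/\Gamma(\alpha_i)$, so $\bar\phi$ satisfies the integral equation and hence the IVP~(\ref{11}).

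The main obstacle is the bookkeeping in the inductive step: unlike the scalar integer-order case where successive fractional integrations collapse neatly into $(L_1+L_2)^n|t|^n/n!$, here each of the $n$ coordinates contributes a kernel of a different order $\alpha_i$, so the iterates generate mixed powers $|t|^{\alpha_1 l_1+\cdots+\alpha_n l_n}$. Organizing these via the multinomial coefficients $(k;l_1,\ldots,l_n)$ and identifying the resulting majorant with $E_{(\alpha_1,\ldots,\alpha_n),1}$ is the real computational work; once that identification is made, convergence and passage to the limit are routine.
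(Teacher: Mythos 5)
The paper states this theorem without proof, so there is no in-paper argument to compare against; the intended justification is evidently ``as in the scalar case.'' Your proposal supplies the missing argument correctly and in the spirit of the Section 4 proof: telescoping series, inductive Lipschitz estimates, and a convergent numerical majorant. The genuinely new content --- which the paper never spells out --- is that with $n$ distinct orders $\alpha_i$ the iterated kernels produce mixed powers $|t|^{\sum_j \alpha_j l_j}$, and your identification of the resulting bound with the homogeneous blocks of $E_{(\alpha_1,\ldots,\alpha_n),1}\bigl((L_1+L_2)|t|^{\alpha_1},\ldots,(L_1+L_2)|t|^{\alpha_n}\bigr)$ is exactly right: the induction closes because applying $\sum_{i}I^{\alpha_i}$ to the $k$th block reproduces the $(k+1)$th block via the multinomial recurrence $(k+1;m_1,\ldots,m_n)=\sum_{i}(k;m_1,\ldots,m_i-1,\ldots,m_n)$. (Incidentally, this is presumably why the multi-parameter Mittag-Leffler function appears in the paper's preliminaries despite never being invoked.) Two small points you should make explicit when writing it up: first, the majorant $B_k(|t|)$ is nondecreasing in $|t|$, which is what justifies replacing $\|\bar\phi_k(qx)-\bar\phi_{k-1}(qx)\|$ by $B_k(|x|)$ using $0<q<1$; second, the coordinatewise estimate only yields $\max_i (L_1+L_2)I^{\alpha_i}B_k$, which you then dominate by the full sum $\sum_i (L_1+L_2)I^{\alpha_i}B_k = B_{k+1}$ --- harmless since all terms are nonnegative, but it deserves a sentence so the reader sees why a single $\|\cdot\|_\infty$-bound can be propagated through $n$ kernels of different orders.
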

	
\subsection{System of Fractional Order Pantograph Equation}
Consider the system of fractional order pantograph equation
\begin{equation}
D^\alpha y(t) = Ay(t)+ By(q t),\quad y(0)= y_0,  \quad 0< \alpha \le 1\label{26}
\end{equation}
where $0<q<1$, $A=\left(a_{ij}\right)_{n\times n}$, $B=\left(b_{ij}\right)_{n\times n}$ and $y=[y_1,y_2,\cdots,y_n]^T$
	The solution of (\ref{16}) using successive approximation is
\begin{eqnarray}
 y(t)&=& \left[\sum_{k=0}^\infty\prod_{j=1}^{k}(A+Bq^{-(k-j)\alpha})\frac{ t^{k\alpha}}{\Gamma (k\alpha+1)}\right]\lambda.\label{27}
\end{eqnarray}
\begin{thm}
	 For $0<q<1$, the  power series (\ref{27})  is  convergent for $t\in{R}$.
\end{thm}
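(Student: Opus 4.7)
The plan is to transfer the argument from the scalar fractional pantograph equation to the matrix setting, by passing to a sub-multiplicative operator norm and majorizing (\ref{27}) term-by-term by a classical Mittag-Leffler series. Fix any vector norm on $\mathbb{R}^n$, let $\|\cdot\|$ also denote the induced (sub-multiplicative) operator norm on $n\times n$ matrices, and set $M_A=\|A\|$ and $M_B=\|B\|$.

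The key observation is that for $0<q<1$ and $\alpha>0$, every scalar coefficient of the form $q^{(k-j)\alpha}$ (with $0\le k-j\le k-1$) lies in $(0,1]$. Consequently each factor in the $k$-th matrix product of (\ref{27}) satisfies $\|A+Bq^{(k-j)\alpha}\|\le M_A+M_B\, q^{(k-j)\alpha}\le M_A+M_B$, and submultiplicativity yields
\begin{equation*}
\Bigl\|\prod_{j=1}^{k}\bigl(A+Bq^{(k-j)\alpha}\bigr)\Bigr\|\le (M_A+M_B)^{k},
\end{equation*}
independently of the order of the (generally non-commuting) factors.

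Combining this bound with $\|\lambda\|$ for the initial vector, the $k$-th term of (\ref{27}) is dominated in norm by $\|\lambda\|(M_A+M_B)^{k}|t|^{k\alpha}/\Gamma(k\alpha+1)$, so the whole series is majorized by
\begin{equation*}
\|\lambda\|\sum_{k=0}^{\infty}\frac{\bigl((M_A+M_B)|t|^{\alpha}\bigr)^{k}}{\Gamma(k\alpha+1)}=\|\lambda\|\,E_\alpha\bigl((M_A+M_B)|t|^{\alpha}\bigr).
\end{equation*}
Because $E_\alpha$ is an entire function of its argument, the right-hand side is finite for every $t\in\mathbb{R}$. Hence (\ref{27}) converges absolutely coordinatewise for all real $t$, and in fact uniformly on every compact subset of $\mathbb{R}$, so the sum defines a continuous (indeed analytic in $t^{\alpha}$) vector-valued function.

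The only conceptual obstacle is that $A$ and $B$ need not commute, so one cannot factor the products as in the scalar case; however, this is neutralized by submultiplicativity of the operator norm, which is insensitive to the ordering. Once that estimate is in place, the proof reduces to the familiar comparison with a Mittag-Leffler series and is structurally identical to the scalar fractional pantograph result proved in the preceding subsection.
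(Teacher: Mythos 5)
Your argument is sound in structure --- passing to a sub-multiplicative operator norm, bounding each factor of the (non-commuting) matrix product, and majorizing the series by $\|\lambda\|E_\alpha\bigl((M_A+M_B)|t|^{\alpha}\bigr)$ is exactly the right way to transfer the scalar comparison argument to the matrix setting. For reference, the paper offers no proof of this theorem at all (only the neighbouring Ambartsumian-system theorem gets the one-line remark that the result ``follows immediately by ratio test''), so your write-up is strictly more informative than anything in the text.

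There is, however, one point you have glossed over, and it matters. You bound factors of the form $A+Bq^{(k-j)\alpha}$, but equation (\ref{27}) as printed reads $A+Bq^{-(k-j)\alpha}$, with a \emph{negative} exponent and $0<q<1$. For that formula the scalars $q^{-(k-j)\alpha}$ are $\ge 1$ and unbounded as $k\to\infty$, so your key estimate $\|A+Bq^{-(k-j)\alpha}\|\le M_A+M_B$ is false; the honest bound is $\prod_{i=0}^{k-1}\bigl(M_A+M_Bq^{-i\alpha}\bigr)$, which grows like $M_B^{\,k}q^{-\alpha k(k-1)/2}=M_B^{\,k}e^{ck^2}$ with $c=\tfrac{\alpha}{2}\ln(1/q)>0$. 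Since $1/\Gamma(k\alpha+1)$ only decays like $e^{-k\alpha\ln k+O(k)}$, the terms of the printed series actually diverge for every $t\neq 0$ whenever $B\neq 0$, so the theorem is false as literally stated. Almost certainly (\ref{27}) contains a sign typo inherited from the Ambartsumian case (where $q>1$ makes $q^{-(k-j)\alpha}\le 1$), and the intended exponent is $+(k-j)\alpha$, consistent with the scalar series (\ref{20}); your proof is then correct and complete. But you should say this explicitly rather than silently dropping the minus sign: identify the discrepancy, state which series you are actually proving convergence for, and note that the factors must be bounded uniformly in $k$ for the Mittag-Leffler comparison to go through.
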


\subsection{ System of Fractional Order Ambartsumian Equations}
In this section, we generalize the Ambartsumian equation (\ref{1}) to the system of fractional order Ambartsumian equations \cite{patade2020series} as: 

\begin{equation}
D^\alpha y(t) = -Iy(t)+By\left(\frac{t}{q}\right),  \quad y(0)=\lambda, \quad 0< \alpha \le 1, \label{2}
\end{equation}
where  $D^\alpha$ denotes Caputo fractional derivative, $I$ is the identity matrix of order $n$,  $1<q$, \\
$y=$$
\begin{bmatrix}
y_1\\
y_2\\
\vdots\\
y_n
\end{bmatrix}
$, 
$\lambda=$
$\begin{bmatrix}
\lambda_1\\
\lambda_2\\
\vdots\\
\lambda_n
\end{bmatrix}$
and 
 $B=$$
\begin{bmatrix}
\frac{1}{q}& 0 & 0 & \cdots & 0\\
0& \frac{1}{q} & 0 & \cdots & 0\\
\vdots& \vdots & \vdots & \ddots & 0\\
0& 0 & 0 & \cdots & \frac{1}{q}
\end{bmatrix}_{n\times n}.
$\\

 Applying SAM to the initial value problem (\ref{2}), we have
 \begin{equation}
  y(t) = y(0) -I J^\alpha y(t)+ B J^\alpha y\left(\frac{t}{q}\right)\label{3}.
 \end{equation}
 Suppose $\phi_k(t)$ be the $k$th approximate solution, where the initial approximate solution is
 taken as
\begin{equation}
\phi_0(t)=\lambda.
\end{equation}

For $k \ge 1$, the recurrent formula as below:
 \begin{equation}
\phi_k (t) =\lambda -I J^\alpha \phi_{k-1} (t)+ B J^\alpha  \phi_{k-1}\left(\frac{t}{q}\right)\label{4}.
\end{equation}
From the recurrent formula, we have 
\begin{eqnarray}
	\phi_1 (t) &=& \lambda -I J^\alpha \phi_{0} (t)+ B J^\alpha  \phi_{0}\left(\frac{t}{q}\right)\nonumber\\
	&=&\lambda -I \frac{\lambda t^\alpha}{\Gamma (\alpha+1)}+ B \frac{\lambda t^\alpha}{\Gamma (\alpha+1)}\nonumber\\
	&=&\left(I +(-I +B)\frac{ t^\alpha}{\Gamma (\alpha+1)}\right)\lambda,\nonumber\\
\phi_2 (t) &=& \lambda -I J^\alpha \phi_{1} (t)+ B J^\alpha  \phi_{1}\left(\frac{t}{q}\right)\nonumber\\
&=&\lambda -I J^\alpha \left[\left(I +(-I +B)\frac{t^\alpha}{\Gamma (\alpha+1)}\right)\lambda\right]+ B J^\alpha  \left(I +(-I +B)\frac{ q^{-\alpha}t^\alpha}{\Gamma (\alpha+1)}\right)\lambda\nonumber\\
&=&\lambda -I \left[\frac{\lambda t^\alpha}{\Gamma (\alpha+1)} +(-I +B)\frac{\lambda t^{2\alpha}}{\Gamma (2\alpha+1)}\right]+ B   \left[\frac{\lambda t^\alpha}{\Gamma (\alpha+1)} +(-I +B)\frac{\lambda q^{-\alpha} t^{2\alpha}}{\Gamma (2\alpha+1)}\right]\nonumber\\
&=&\left[I +(-I +B)\frac{ t^\alpha}{\Gamma (\alpha+1)} +  (-I +Bq^{-\alpha})(-I +B)\frac{ t^{2\alpha}}{\Gamma (2\alpha+1)}\right]\lambda,\nonumber
\end{eqnarray}

\begin{eqnarray}
\phi_3 (t)&=&\left[I +(-I +B)\frac{ t^\alpha}{\Gamma (\alpha+1)} +  (-I +Bq^{-\alpha})(-I +B)\frac{ t^{2\alpha}}{\Gamma (2\alpha+1)}\right.\nonumber\\
&&\left.+(-I +Bq^{-2\alpha})(-I +Bq^{-\alpha})(-I +B)\frac{ t^{3\alpha}}{\Gamma (3\alpha+1)}\right]\lambda,\nonumber\\
&&\cdots,\nonumber\\
\phi_k (t)&=& \left[I + \sum_{m=1}^k\prod_{j=1}^{m}(-I+Bq^{-(m-j)\alpha})\frac{ t^{m\alpha}}{\Gamma (m\alpha+1)}\right]\lambda\nonumber
\end{eqnarray}
As $k\rightarrow\infty$,$\quad \phi_k (t)\rightarrow y(t)$
\begin{eqnarray}
y(t)&=& \left[I + \sum_{k=1}^\infty\prod_{j=1}^{k}(-I+Bq^{-(k-j)\alpha})\frac{ t^{k\alpha}}{\Gamma (k\alpha+1)}\right]\lambda.\nonumber
\end{eqnarray}
If we set $\prod_{j=1}^{k}(-I+Bq^{(k-j)\alpha})=I$, for $k=0$,
then

\begin{eqnarray}
 y(t)&=& \left[\sum_{k=0}^\infty\prod_{j=1}^{k}(-I+Bq^{-(k-j)\alpha})\frac{ t^{k\alpha}}{\Gamma (k\alpha+1)}\right]\lambda.\label{5}
\end{eqnarray}

\begin{thm}
	 For $q>1$, the  power series 
	\begin{eqnarray}
	y(t)&=& \left[\sum_{k=0}^\infty\prod_{j=1}^{k}(-I+Bq^{-(k-j)\alpha})\frac{ t^{k\alpha}}{\Gamma (k\alpha+1)}\right]\lambda\nonumber
	\end{eqnarray}
 is  convergent for $t\in{R}$.
\end{thm}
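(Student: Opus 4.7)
The plan is to reduce this matrix-valued series to a scalar series that is dominated term-by-term by a Mittag-Leffler function. The crucial observation is that the matrix $B$ defined just before equation (\ref{2}) is scalar: $B=\tfrac{1}{q}I$. Consequently every factor in the product collapses to a scalar multiple of the identity,
\begin{equation*}
-I+Bq^{-(k-j)\alpha}=\left(\frac{1}{q^{\,1+(k-j)\alpha}}-1\right)I,
\end{equation*}
and the factors commute, so no ordering issue arises. Setting
\begin{equation*}
c_k:=\prod_{j=1}^{k}\left(\frac{1}{q^{\,1+(k-j)\alpha}}-1\right),\qquad c_0:=1,
\end{equation*}
the series in the theorem becomes $y(t)=\bigl[\sum_{k=0}^{\infty}c_k\,t^{k\alpha}/\Gamma(k\alpha+1)\bigr]\lambda$, and establishing convergence in any sub-multiplicative matrix norm reduces to showing that this scalar series converges for every $t\in\mathbb{R}$.

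Next I would bound $|c_k|$. Since $q>1$ and $\alpha>0$, for each valid index we have $0<q^{-(1+(k-j)\alpha)}<1$, so every factor lies in $(-1,0)$. Taking absolute values factor by factor yields $|c_k|\leq 1$ for all $k\geq 0$. Applying the triangle inequality to the partial sums and then the Weierstrass $M$-test gives
\begin{equation*}
\sum_{k=0}^{\infty}\,\|c_k I\|\,\frac{|t|^{k\alpha}}{\Gamma(k\alpha+1)}\;\leq\;\sum_{k=0}^{\infty}\frac{|t|^{k\alpha}}{\Gamma(k\alpha+1)}\;=\;E_\alpha(|t|^\alpha),
\end{equation*}
where $E_\alpha$ is the Mittag-Leffler function introduced in Section \ref{Preliminaries}. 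Since $E_\alpha$ is entire, the dominating series converges for every $t\in\mathbb{R}$, and hence the original series converges absolutely, with norm estimate $\|y(t)\|\leq E_\alpha(|t|^\alpha)\,\|\lambda\|$. Uniform convergence on every compact subset of $\mathbb{R}$ is a direct consequence and can be recorded as a corollary if desired.

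The main obstacle is conceptual rather than technical: one must recognise that the scalar structure of $B$ makes the matrix product telescope to $c_k I$, after which the argument is essentially identical to the scalar Ambartsumian convergence proof in Section 6.1 (the factors $1/q^{j}-1$ there play the same role as $1/q^{\,1+(k-j)\alpha}-1$ here). Had $B$ been a genuinely non-scalar matrix, one would instead have to use $\|-I+Bq^{-(k-j)\alpha}\|\leq 1+\|B\|q^{-(k-j)\alpha}$ and bound the telescoping product by $\prod_{j=1}^{k}(1+\|B\|q^{-(k-j)\alpha})$, which remains bounded in $k$ because $\sum_{\ell\geq 0}\|B\|q^{-\ell\alpha}$ converges; comparison with $E_\alpha$ (or a two-parameter Mittag-Leffler function) would then close the argument, so the method is robust.
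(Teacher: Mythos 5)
Your proof is correct, but it takes a different route from the paper, whose entire argument is the single line ``Result follows immediately by ratio test.'' Your key structural observation --- that $B=\tfrac{1}{q}I$ is scalar, so each factor collapses to $\bigl(q^{-(1+(k-j)\alpha)}-1\bigr)I$ with absolute value strictly less than $1$, giving $|c_k|\le 1$ and hence termwise domination by $\sum_{k\ge 0}|t|^{k\alpha}/\Gamma(k\alpha+1)=E_\alpha(|t|^\alpha)$ --- is a comparison argument rather than a ratio test. The paper's ratio test would instead note that $c_{k+1}/c_k=q^{-(1+k\alpha)}-1\to -1$ while $\Gamma(k\alpha+1)/\Gamma((k+1)\alpha+1)\to 0$, so the ratio of consecutive terms tends to $0$ for every fixed $t$. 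Both arguments are valid and about equally short once the scalar structure of $B$ is exploited; yours buys the explicit a priori bound $\|y(t)\|\le E_\alpha(|t|^\alpha)\|\lambda\|$ and uniform convergence on compacta essentially for free, and your closing remark correctly identifies how to salvage the argument for a genuinely non-scalar $B$ (where the ratio test on norms would be less immediate, since $\|\prod_j(-I+Bq^{-(k-j)\alpha})\|$ need not behave multiplicatively). Two minor quibbles: invoking the Weierstrass $M$-test is only needed for the uniform-convergence corollary, not for pointwise convergence at fixed $t$, where the comparison test suffices; and for $t<0$ with $\alpha$ non-integer the terms $t^{k\alpha}$ are not real, but that ambiguity is inherited from the paper's statement rather than introduced by your proof.
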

\textit{\textbf{Proof:}}$\quad$
Result follows immediately by ratio test \cite{apostol1958mathematical}.
\section{Illustrative Examples}\label{exampl}
\textbf{Example 1.} Consider the  non-linear differential equations with proportional delay \cite{evans2005adomian,rangkuti2012exact,alomari2009solution,ratib2013optimal}
\begin{equation}
\frac{dy(t)}{dt} = 1 - 2 y^2\left(\frac{t}{2}\right),\quad y(0) = 0. \label{21}
\end{equation}

The corresponding integral equation is
\begin{equation}
y(t)= \int_{0}^{t}\left(1-2u^2\left(\frac{t}{2}\right)\right) dx.
\end{equation}
By using successive approximation method  (\ref{22}), we obtain
\begin{eqnarray}
\phi_0(t) &=& 0,\nonumber\\
\phi_1(t) &=&t,\nonumber\\
\phi_2(t) &=& t-\frac{t^3}{6},\nonumber\\
\phi_3(t) &=& t-\frac{t^3}{6}+\frac{t^5}{120}-\frac{t^7}{8064},\nonumber\\
\phi_4(t) &=& t-\frac{t^3}{6}+\frac{t^5}{120}-\frac{t^7}{5040}+\frac{61 t^9}{23224320}-\frac{67 t^{11}}{3406233600}+\frac{t^{13}}{12881756160}-\frac{t^{15}}{7990652436480},\nonumber\\
\phi_5(t) &=& t-\frac{t^3}{6}+\frac{t^5}{120}-\frac{t^7}{5040}+\frac{61 t^9}{23224320}-\cdots-\frac{t^{31}}{1062664199886151693758358595882188800},\nonumber
\end{eqnarray}
and so on.\\
The exact solution of Eq.(\ref{21}) is \, $y(t) = \sin t$.\\
The 5-term solutions of Eq.(\ref{21}) using Adomian decomposition method (ADM) \cite{evans2005adomian}, variational iteration method (VIM) \cite{rangkuti2012exact}, homotopy analysis method (HAM) \cite{alomari2009solution}, optimal homotopy asymptotic method (OHAM) \cite{ratib2013optimal}   are same and is given by
\begin{eqnarray}
y(t) &=& t-\frac{t^3}{6}+\frac{t^5}{120}-\frac{t^7}{5040}+\frac{t^9}{362880}-\frac{t^{11}}{39916800}+\frac{t^{13}}{6227020800}\nonumber\\
&&-\frac{t^{15}}{1307674368000}+\frac{t^{17}}{355687428096000}.\label{n1.4}
\end{eqnarray}
The 4-term OHAM solution \cite{ratib2013optimal} of Eq.(\ref{21}) is
\begin{eqnarray}
y(t) &=& t-0.166665 t^3+0.00832857 t^5 - 0.000192105 t^7.
\end{eqnarray}
We compare $5^{th}$ approximation solution (SAM) and 5-term solutions ( ADM, VIM, HAM) with  exact solution   in Fig.(1) and $4^{th}$ approximation solution (SAM) with 4-term solution (OHAM) in Fig.(2). The absolute errors in computation are shown in Figs.(3)-(4). It can be observed that SAM solution is better than the solution obtained by using other  methods.

\begin{tabular}{c}
\includegraphics[scale=0.5]{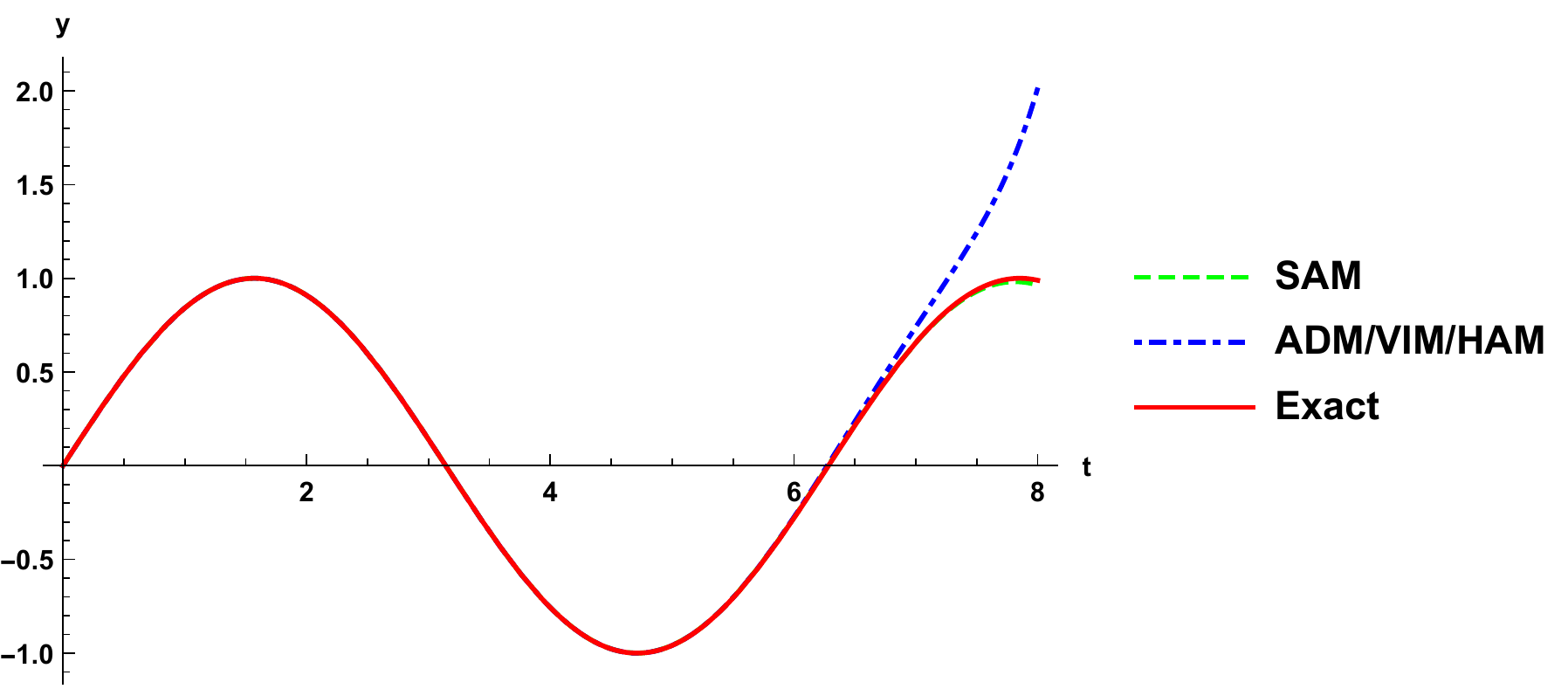}\\
Fig.1:Comparison of SAM, ADM/VIM/HAM solutions with exact solution of Eq.(\ref{21}).\\
\end{tabular}

\begin{tabular}{c}
\includegraphics[scale=0.5]{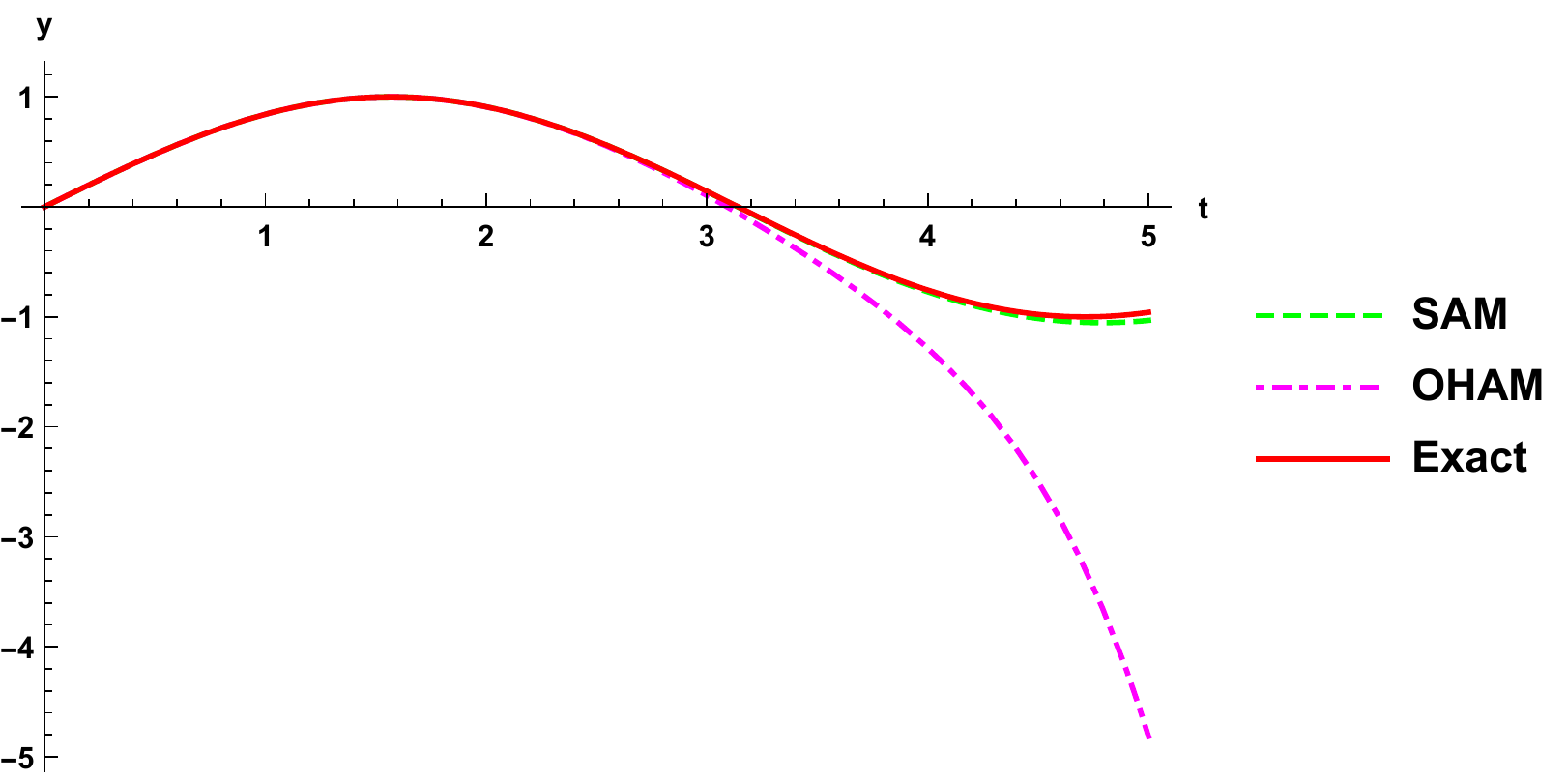}\\
Fig.2:Comparison of SAM, OHAM solutions with exact solution of Eq.(\ref{21}).\\
\end{tabular}

Remark: The ADM/VIM/HAM and OHAM solutions  given in \cite{evans2005adomian,rangkuti2012exact,alomari2009solution,ratib2013optimal} are considered only for the interval $[0,1]$ . Here we have successfully extended the solution using SAM in intervals $[0,8]$.

\begin{tabular}{c}
\includegraphics[scale=0.5]{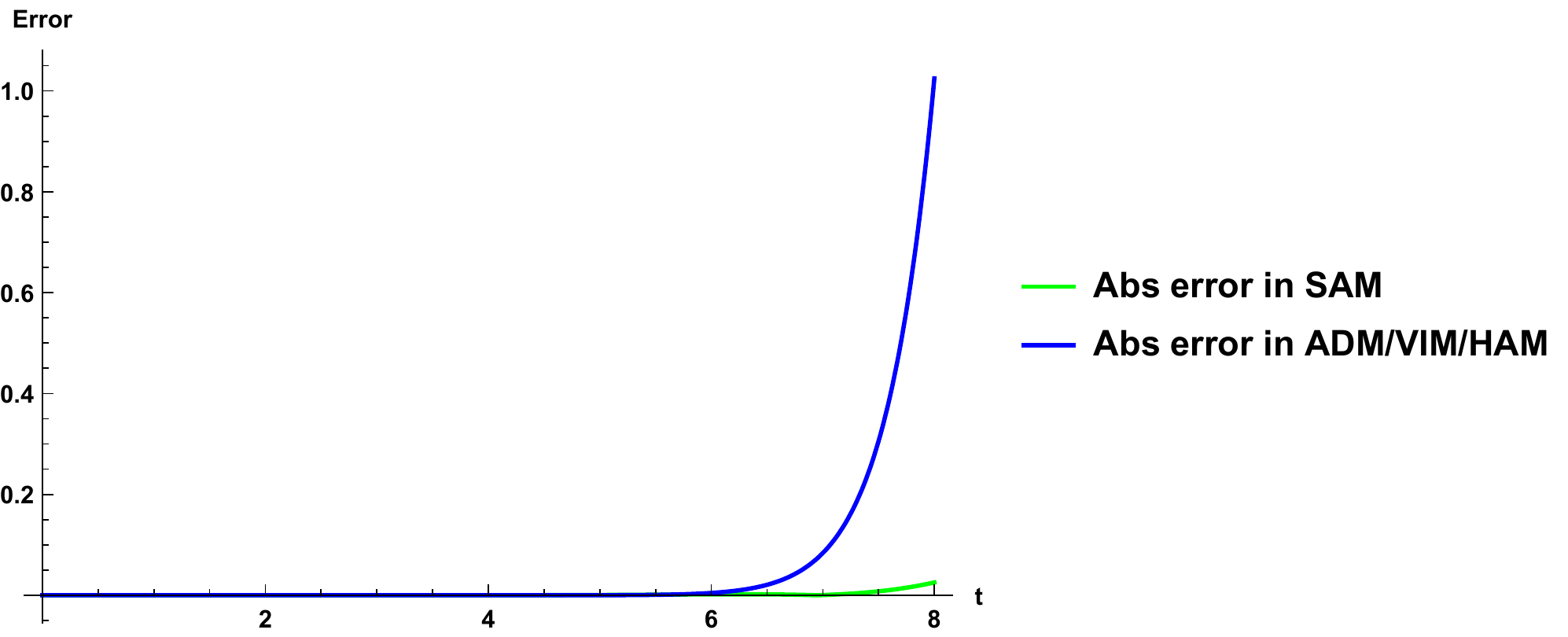}\\
Fig.3:Comparison of absolute errors in SAM and ADM/VIM/HAM solutions.\\
\end{tabular}

\begin{tabular}{c}
\includegraphics[scale=0.5]{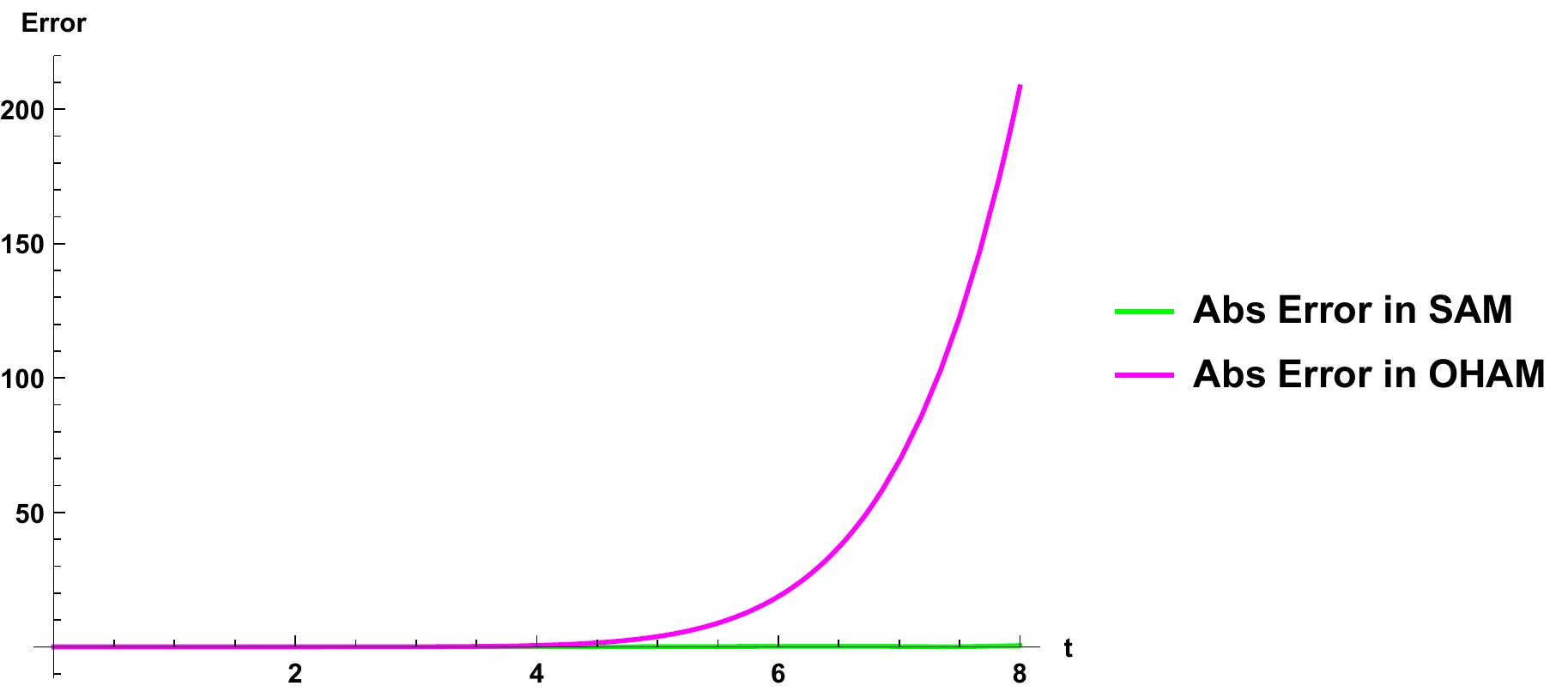}\\
Fig.4:Comparison of absolute errors in SAM and OHAM solutions.\\
\end{tabular}

\section{Conclusions}\label{concl}
  In this paper, we solved non-linear differential equations with proportional delay using the successive approximation method (SAM). The existence, uniqueness, and stability theorems for differential equations with proportional delay are presented. The convergence results are derived by using the Lipschitz condition. A generalization of fractional order and a system of fractional order cases are also presented. The series solution of the pantograph equation and Ambersumian equation is obtained using SAM. Finally, we illustrated the effectiveness of SAM through an example.

\section{References}

\end{document}